\documentclass[11pt]{amsart}
\usepackage{geometry}                
\geometry{letterpaper}                   
\usepackage{graphicx}
\usepackage{amssymb}
\usepackage{epstopdf}
\usepackage{verbatim} 
\usepackage{color}
\DeclareGraphicsRule{.tif}{png}{.png}{`convert #1 `dirname #1`/`basename #1 .tif`.png}

\def\R{\mathbb{R}}
\def\eps{\varepsilon}
\def\reals{\mathbb{R}}

\def\C{\mathbb{C}}
\def\N{\mathbb{N}}

\def\B{\mathcal{B}}

\def\G{\mathcal{G}}
\def\P{\mathcal{P}}

\newtheorem{theorem}{Theorem}
\newtheorem{corollary}[theorem]{Corollary}
\newtheorem{remark}[theorem]{Remark}
\newtheorem{lemma}[theorem]{Lemma}

\author{Michael Christ}
\address{
        Michael Christ\\
        Department of Mathematics\\
        University of California \\
        Berkeley, CA 94720-3840, USA}
\email{mchrist@math.berkeley.edu}
\author{Diogo Oliveira e Silva}
\address{
        Diogo Oliveira e Silva\\
        Department of Mathematics\\
        University of California \\
        Berkeley, CA 94720-3840, USA}
\email{dosilva@math.berkeley.edu}
\thanks{The first author was supported by NSF grant DMS-0901569. 
The second author was supported by the Funda\c{c}\~{a}o para a Ci\^{e}ncia e a Tecnologia 
(FCT/Portugal grant SFRH/BD/28041/2006).
Any opinions, findings, and conclusions
or recommendations expressed in this paper are those of the authors
and do not necessarily reflect the views of the National Science Foundation.}

\title{On Trilinear Oscillatory Integrals}
\date{May 5, 2010}                 

\begin{document}
\maketitle

\begin{abstract}
We examine a certain class of trilinear integral operators which incorporate oscillatory factors $e^{iP}$, where $P$ is a real-valued polynomial, 
and prove smallness of such integrals in the presence of rapid oscillations.
\end{abstract}

\section{introduction}

This note continues the study of
multilinear oscillatory integral expressions of the form
$$I(\lambda P;f_1,\ldots, f_n)=\int_{\R^m}e^{i\lambda P(x)}\prod_{j=1}^n f_j\circ\pi_j(x)\eta(x)dx,$$
where $\lambda\in\R$ is a parameter, $P:\R^m\rightarrow\R$ is a real-valued polynomial, $\pi_j:\R^m\rightarrow V_j$ are orthogonal projections onto some subspaces $V_j$ of $\R^m$, $f_j:V_j\rightarrow\C$ are locally integrable functions with respect to Lebesgue measure on $V_j$, and $\eta\in C_0^1(\R^m)$ is compactly supported. All the subspaces $V_j$ are assumed to have the same dimension, which is denoted by $\kappa$. 

Christ, Li, Tao, and Thiele \cite{CLTT} 
initiated this study, exploring
conditions on the polynomial phase $P$ and on the projections $\{\pi_j\}$ which ensure decay estimates of the form
\begin{equation} \label{decay}
|I(\lambda P; f_1,\ldots,f_n)|\leq C\langle\lambda\rangle^{-\epsilon}\prod_{j=1}^n \|f_j\|_{L^\infty(V_j)}.
\end{equation}
Their results were restricted to the comparatively extreme cases $\kappa=1$ and $\kappa=m-1$, 
and the small codimension case $n\leq\frac{m}{m-\kappa}$, leaving most cases open.

In the present paper we consider the trilinear situation in $\R^m=\R^{2 \kappa}$ for arbitrary $\kappa\ge 2$. 
A typical expression of this type is then
$$I(P;f_1,f_2,f_3)=\iint_{\R^{2 \kappa}}e^{i P(x,y)}f_1(x)f_2(y)f_3(x+y)\eta(x,y)dxdy,$$
with coordinates $(x,y)\in\R^{\kappa + \kappa }$.
Before stating our main theorem, we introduce some notation and recall relevant results from the literature.

\subsection{Review}
Let $d\geq 1$ be a positive integer, and let $\{\pi_j\}_{j=1}^3$ be surjective linear
mappings from $\R^{2 \kappa}$ to $\reals^ \kappa $. 
A polynomial $P:\R^{2 \kappa}\to\R$ is said to be degenerate (with respect to the projections $\{\pi_j\}_j$) if there exist polynomials $p_j:\R^\kappa\rightarrow\R$ such that $P=\sum_{j=1}^3 p_j\circ\pi_j$. 
The vector space of all degenerate polynomials $P:\R^{2 \kappa}\rightarrow\R$ of degree $\leq d$ is a subspace $\P_{degen}$ of the vector space $\P(d)$ of all polynomials $P:\R^{2 \kappa}\rightarrow\R$ of degree $\leq d$. Denote the quotient space by $\P(d)/\P_{degen}$, by $[P]$ the equivalence class of $P$ in $\P(d)/\P_{degen}$, and by $\|\cdot\|_{nd}$ some fixed choice of norm for this quotient space. In a similar way, let $\|\cdot\|_{nc}$ denote some fixed choice of norm for the quotient space of polynomials $P:\R^{2 \kappa}\rightarrow\R$ of degree $\leq d$ modulo constants.

It will be convenient to work with norms defined by inner products.
If $P(x,y)=\sum_{\alpha,\beta} c_{\alpha\beta}x^\alpha y^\beta$, then we set
$$\|P\|_{\P(d)}=\Big(\sum_{\alpha,\beta} |c_{\alpha\beta}|^2\Big)^{1/2},\;\;\|P\|_{nc}=\Big(\sum_{(\alpha,\beta)\neq (0,0)} |c_{\alpha\beta}|^2\Big)^{1/2}.$$
$\|\cdot\|_{nd}$ is defined by choosing some Hilbert space structure for $\P(d)/\P_{degen}$.

The norm $\|\cdot\|_{nc}$ controls oscillatory integrals of the first kind, in light of the following version of stationary phase:

\begin{theorem}\label{sp}
Let $p(t)=\sum_{|\alpha|\leq d}c_\alpha t^\alpha, c_\alpha\in\R$, be a polynomial in $m$ variables of degree $d\geq 1$. Then 
$$\Big|\int_{[0,1]^m} e^{ip(t)}dt\Big|\leq C_{d,m}\Big(\sum_{0<|\alpha|\leq d} |c_\alpha|\Big)^{-1/d}.$$
\end{theorem}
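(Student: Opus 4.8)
The plan is to deduce Theorem~\ref{sp} from a companion \emph{sublevel set} estimate, which I would prove first, by induction on the degree. For a real polynomial $Q$ in $m$ variables, let $\|Q\|_{*}$ denote the largest modulus of a coefficient of $Q$ of positive degree. The companion statement is: for every real polynomial $Q$ in $m$ variables of degree $\le d$ and every $\delta>0$,
\begin{equation*}
\big|\{t\in[0,1]^m:\ |Q(t)|\le\delta\}\big|\le C_{d,m}\,\big(\delta/\|Q\|_{*}\big)^{1/d}.
\end{equation*}
Both this and Theorem~\ref{sp} are trivial unless the relevant ratio --- namely $S:=\sum_{0<|\alpha|\le d}|c_\alpha|$ in Theorem~\ref{sp}, resp.\ $\|Q\|_{*}/\delta$ --- is large, since the quantity being estimated is always $\le 1$; so one may assume this. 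When $d=1$ both are immediate: an affine phase factors, $\int_{[0,1]^m}e^{iP}=e^{ic_0}\prod_j\int_0^1 e^{ic_jt_j}\,dt_j$, and the factor indexed by the largest $|c_j|$ is $O(m/S)$; and for affine $Q$, slicing in the variable carrying the dominant coefficient shows that $\{|Q|\le\delta\}$ has measure $O(\delta/\|Q\|_{*})$ by Fubini.

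For the inductive step of the sublevel estimate ($d\ge 2$) the engine is differentiation, which lowers the degree by one. The first task is a quantitative dichotomy, with all constants depending only on $d$ and $m$: \emph{either} $Q$ has a coefficient of degree $\ge 2$ of modulus comparable to $\|Q\|_{*}$ --- in which case, taking a coordinate $i$ in its support, $\partial_iQ$ has degree $\le d-1$ and $\|\partial_iQ\|_{*}\gtrsim\|Q\|_{*}$ --- \emph{or} the linear part of $Q$ dominates, in which case, taking $i$ with $|c_{e_i}|\gtrsim\|Q\|_{*}$, the polynomial $\partial_iQ$ stays within $\tfrac12\|Q\|_{*}$ of the constant $c_{e_i}$ throughout $[0,1]^m$, so $|\partial_iQ|\gtrsim\|Q\|_{*}$ pointwise. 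In the second alternative $Q(\cdot,t')$ is monotone along every line in the $t_i$-direction with derivative $\gtrsim\|Q\|_{*}$, so $|\{|Q|\le\delta\}|\lesssim\delta/\|Q\|_{*}$, which is stronger than required. In the first alternative I would write
\begin{equation*}
\{|Q|\le\delta\}\subseteq\{|\partial_iQ|\le\delta'\}\cup\big(\{|Q|\le\delta\}\cap\{|\partial_iQ|>\delta'\}\big),
\end{equation*}
bound the first set by the inductive hypothesis applied to $\partial_iQ$ (degree $\le d-1$), getting $C\,(\delta'/\|Q\|_{*})^{1/(d-1)}$, and bound the second by slicing in $t_i$: on each of the $O_d(1)$ intervals where $|\partial_iQ|>\delta'$, the function $Q(\cdot,t')$ is monotone with $|\partial_{t_i}Q|>\delta'$, so its $\delta$-sublevel set there is an interval of length $\lesssim\delta/\delta'$, and integrating out the remaining $m-1$ variables gives $\lesssim\delta/\delta'$. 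Balancing the two bounds, i.e.\ taking $\delta'\sim\delta^{(d-1)/d}\|Q\|_{*}^{1/d}$, yields exactly $(\delta/\|Q\|_{*})^{1/d}$. The crucial arithmetic is that $\tfrac1{d-1}>\tfrac1d$: this degree drop is what makes the balancing land on the sharp exponent $1/d$ rather than on a weaker power of $\delta/\|Q\|_{*}$.

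Finally, to obtain Theorem~\ref{sp} for $d\ge 2$, apply the same dichotomy to the phase $P$, with $S$ in place of $\|P\|_{*}$ (the two being comparable), obtaining a coordinate $i$ for which either $\partial_iP$ has degree $\le d-1$ and $\|\partial_iP\|_{*}\gtrsim S$, or $|\partial_iP|\gtrsim S$ on all of $[0,1]^m$. Split $[0,1]^m=\{|\partial_iP|\le\delta\}\cup\{|\partial_iP|>\delta\}$. The first set has measure $\lesssim(\delta/S)^{1/(d-1)}$ by the sublevel estimate in degree $d-1$ (and is empty for $\delta\lesssim S$ in the second alternative). On the second set I would integrate by parts once in $t_i$ along each line $t_i\mapsto(t_i,t')$: since $\partial_iP(\cdot,t')$ is a polynomial of degree $\le d-1$ in $t_i$, that set meets the line in $O_d(1)$ intervals and $\partial_iP$ has $O_d(1)$ monotone pieces there, so the boundary terms and the total variation of $1/\partial_iP$ along the line are all $O_d(1/\delta)$; integrating over $t'$ gives $\big|\int_{\{|\partial_iP|>\delta\}}e^{iP}\big|\lesssim_d 1/\delta$. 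Choosing $\delta=S^{1/d}$ balances the two contributions and produces the bound $S^{-1/d}$; the case $d=1$ is the base case above. I expect the only genuinely delicate point to be calibrating the quantitative dichotomy so that its constants depend only on $d$ and $m$ --- a bookkeeping matter of counting multi-indices and fixing the threshold between the two alternatives --- together with the classical one-dimensional van der Corput bookkeeping behind the integration by parts; the two optimizations and the slicing arguments are routine.
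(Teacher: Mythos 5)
The paper does not actually prove Theorem~\ref{sp}: it merely cites the one-dimensional van der Corput lemma from Stein's book and asserts that the multidimensional polynomial version follows as a ``straightforward consequence.'' Your proposal supplies a complete, self-contained derivation, by first establishing the companion sublevel set bound $|\{|Q|\le\delta\}|\lesssim_{d,m}(\delta/\|Q\|_*)^{1/d}$ via induction on the degree (differentiate to drop the degree by one, split according to whether $|\partial_iQ|$ is small or large, slice in $t_i$, and balance thresholds), and then running a parallel split for the oscillatory integral itself, with one-dimensional van der Corput/integration by parts on the set where the chosen partial derivative is large. This is correct and is essentially the standard elementary route (in the spirit of the Carbery--Christ--Wright treatment that the paper cites as \cite{CCW}); the arithmetic of the two balancings is right, producing $(\delta/\|Q\|_*)^{1/d}$ and $S^{-1/d}$ respectively, and the base case $d=1$ factors as you say. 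The one point that needs more than you wrote is the dichotomy: ``the linear part dominates'' must be taken to mean that \emph{every} coefficient of degree $\ge 2$ is below a small threshold $\eta(d,m)\|Q\|_*$, not merely that the largest positive-degree coefficient is a linear one; only then does the estimate $\sum_{|\alpha|\ge 2}|\alpha_i c_\alpha|\le \tfrac12\|Q\|_*$ force $|\partial_iQ|\gtrsim\|Q\|_*$ on the whole cube. You flag exactly this (``calibrating the quantitative dichotomy \ldots fixing the threshold'') and it is indeed routine bookkeeping, so there is no genuine gap --- just a sentence to sharpen. Your route has the advantage of being self-contained and of simultaneously delivering the sublevel bound the paper later invokes from \cite{BG}; the paper's citation-only route is shorter but assumes the reader can reconstruct the multidimensional reduction.
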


Theorem \ref{sp} is a straightforward consequence of  the well-known lemma of van der Corput \cite{S}.

On the other hand, the norm $\|\cdot\|_{nd}$ controls multilinear oscillatory integrals (in particular, oscillatory integrals of the second kind), as is shown in \cite{CLTT}. The following theorem from \cite{CLTT} 
is most relevant to our discussion:

\begin{theorem}\label{l2}
Suppose that $n<2m$ and $d<\infty$. Then, for any family $\{V_j\}_{j=1}^n$ of one-dimensional subspaces of $\R^m$ which lie in general position, there exist constants $C<\infty$ and $\epsilon>0$ such that
$$|I(P;f_1,\ldots,f_n)|\leq C\langle \|P\|_{\text{nd}}\rangle^{-\epsilon}\prod_{j=1}^n\|f_j\|_{L^2(V_j)}$$
 for all polynomials $P:\R^m\rightarrow\R$ of degree $\leq d$  and for all functions $f_j\in L^2(\R)$.
Moreover, $\epsilon$ can be taken to depend only on $n,m$ and $d$. 
\end{theorem}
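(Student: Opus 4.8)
The plan is to argue by induction, lowering the degree of the phase one step at a time, with the inductive step organized around a structure/pseudorandomness dichotomy. Fix a small parameter $\sigma>0$, and for each $j$ let $\Sigma_j$ be the family of unimodular functions $e^{iq}$ on $V_j\cong\R$ with $q$ real of degree $\le d$. Call $f_j$ \emph{$\sigma$-uniform} (relative to $\Sigma_j$) if $|\langle f_j,g\rangle|\le\sigma\|f_j\|_{L^2}$ for every $g\in\Sigma_j$ with $\|g\|_{L^2}\le1$. A soft functional-analytic lemma of the kind developed in \cite{CLTT} lets one decompose each $f_j=f_j^{\mathrm{str}}+f_j^{\mathrm{unif}}$, with $f_j^{\mathrm{unif}}$ $\sigma$-uniform and $f_j^{\mathrm{str}}$ a superposition of boundedly many elements of $\Sigma_j$ with controlled coefficients. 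Expanding $I(P;f_1,\ldots,f_n)$ multilinearly, it then suffices to establish two estimates: (i) $|I(P;g_1,\ldots,g_n)|\lesssim\langle\|P\|_{\mathrm{nd}}\rangle^{-\epsilon}$ whenever $g_j\in\Sigma_j$ for every $j$; and (ii) $|I(P;f_1,\ldots,f_n)|\lesssim_\sigma 1$, \emph{uniformly in $P$}, whenever $\prod_j\|f_j\|_{L^2}\le1$ and at least one $f_j$ is $\sigma$-uniform.

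Estimate (i) is where the norm $\|\cdot\|_{\mathrm{nd}}$ and Theorem \ref{sp} enter. For $g_j=e^{iq_j\circ\pi_j}$ one has $I(P;g_1,\ldots,g_n)=\int_{\R^{m}}e^{i(P+\sum_jq_j\circ\pi_j)}\eta$, a scalar oscillatory integral whose phase differs from $P$ only by a degenerate polynomial. Hence $\|P+\sum_jq_j\circ\pi_j\|_{\mathrm{nc}}$ is bounded below by the distance from $P$ to $\P_{degen}$ together with the constants, which (by equivalence of norms on the relevant finite-dimensional quotient) is comparable to $\|P\|_{\mathrm{nd}}$, uniformly in the $q_j$. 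Splitting the support of $\eta$ into boundedly many small cubes and applying Theorem \ref{sp} on each produces $|I(P;g_1,\ldots,g_n)|\lesssim\langle\|P\|_{\mathrm{nd}}\rangle^{-1/d}$, which is (i).

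For (ii), suppose $f_k$ is $\sigma$-uniform, and write $I=\langle f_k,\overline{F_k}\rangle$, where $F_k$ denotes the partial oscillatory integral of the remaining factors over the fibres of $\pi_k$. If $F_k$ is itself structured, then $\sigma$-uniformity of $f_k$ forces $|I|\lesssim\sigma$. Otherwise we apply Cauchy--Schwarz, $|I|^2\le\|F_k\|_{L^2}^2$, and expand the square: this realizes $|I|^2$ as an average over a shift parameter $h$ (ranging over a fibre of $\pi_k$) of oscillatory integrals $I'(Q_h;g_1,\ldots,g_{n-1})$, where $Q_h(x)=P(x)-P(x-h)$ has degree $\le d-1$ in $x$ (the top-degree part of $P$ cancels in the difference), the surviving projections $\{\pi_j\}_{j\ne k}$ remain in general position, and $g_j(\cdot)=f_j(\cdot)\overline{f_j(\cdot-\pi_j h)}$. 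The hypothesis $n<2m$ is exactly the admissibility condition for this step: in the uncombined picture the Cauchy--Schwarz produces $2(n-1)$ functions in ambient dimension $2m-1$, and $2(n-1)<2(2m-1)$. Thus the inductive hypothesis --- now with a phase of degree $d-1$ --- applies; integrating over the bounded range of $h$ and invoking the algebraic fact below closes the induction, the base cases $n=1$ and $d\le1$ being elementary.

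I expect the principal obstacle to be the algebraic bookkeeping just invoked: one must show that $[P]\neq 0$ forces $[Q_h]\neq 0$ for $h$ outside a set of small measure, and \emph{quantitatively}, that there the non-degeneracy norm of $Q_h$ (computed now in the space of polynomials of degree $\le d-1$) is $\gtrsim\|P\|_{\mathrm{nd}}^{\theta}$ for some fixed $\theta>0$. Morally, were $P(x)-P(x-h)$ degenerate for all small $h$, then differentiating in $h$ would make every directional derivative of $P$ degenerate, and a descent on the degree would force $P$ itself to be degenerate; the work is in making this effective and in bounding the exceptional set of $h$. A secondary difficulty is that the combined functions $g_j$ carry only $L^1(V_j)$ bounds, so the induction must be run in a form robust under passage between $L^2$- and $L^1$-type control --- this is precisely what the $\sigma$-uniformity device is there to absorb --- and one must verify that $C$ and $\epsilon$ can be kept uniform in $n$, $m$, and $d$.
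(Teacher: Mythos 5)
First, a structural observation: the paper does not prove Theorem~\ref{l2}. It is quoted from \cite{CLTT} and used as a black box (in Lemma~\ref{first} and in Case~2 of the final section), so there is no internal proof to compare your attempt against. Evaluated on its own terms, your proposal does identify the $TT^*$/Cauchy--Schwarz engine that drives the CLTT argument: eliminate one factor by squaring over a fibre of $\pi_k$, rewrite the result as a shift average against the difference phase $Q_h(x)=P(x)-P(x-h)$ of degree $\le d-1$, and recurse. You also correctly locate the real difficulty in the quantitative algebraic lemma that $\|P\|_{nd}$ large must force $\|Q_h\|_{nd}$ (now taken over the smaller projection family) to be large for most $h$; that is indeed where the work is.

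However, two things do not close as written. (a)~The $\sigma$-uniformity wrapper is not coherently integrated. Your estimate~(ii), ``$|I|\lesssim_\sigma 1$ uniformly in $P$ when some factor is $\sigma$-uniform,'' is only the trivial H\"older/Young bound and cannot combine with~(i) to produce any decay in $\|P\|_{nd}$; for the dichotomy to do work, (ii) would need to yield something like $|I|\lesssim\sigma^{c}$, after which one chooses $\sigma$ as a function of $\|P\|_{nd}$. As it stands, the Cauchy--Schwarz step $|I|^2\le\|F_k\|_{L^2}^2$ never invokes $\sigma$-uniformity of $f_k$, and the alternative branch ``$F_k$ structured'' is never triggered by anything in the argument. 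The decomposition $f_j=f_j^{\mathrm{str}}+f_j^{\mathrm{unif}}$ is therefore doing nothing; either drop it and iterate Cauchy--Schwarz directly, or sharpen~(ii) so that it carries a genuine $\sigma$-gain. (b)~The $L^1$-vs-$L^2$ mismatch you describe as a ``secondary difficulty'' is in fact the step at which the hypothesis $n<2m$ is actually consumed, and it breaks your induction as formulated. The inductive hypothesis you quote takes $L^2$ inputs, but $\bigl\|f_j(\cdot)\overline{f_j(\cdot-\pi_jh)}\bigr\|_{L^2}$ is not controlled by $\|f_j\|_{L^2}$, so the recursion is illegal after one step. The repair is to keep $f_j(\cdot)$ and $\overline{f_j(\cdot-\pi_jh)}$ as two separate $L^2$ factors living in $\R^{2m-1}$ -- the ``uncombined picture'' you mention only in passing -- and to state and prove the inductive claim in that form, where the count $2(n-1)<2(2m-1)$ is precisely the needed hypothesis. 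Without that reformulation the argument does not go through.
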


\subsection{Result}
Let $\{\pi_j: 1\le j\le 3\}$ be a collection of three surjective linear mappings from $\reals^{2 \kappa}$ to $\reals^ \kappa$. 
We say that these lie in general position if for any two indices $i\ne j\in\{1,2,3\}$,
the nullspace of $\pi_i$ is transverse to the nullspace of $\pi_j$.

In the present paper we prove the following:
\begin{theorem}\label{thm}
Let $\kappa\ge 1$ and $d<\infty$.
Let $\{\pi_j: 1\le j\le 3\}$ be a collection of three surjective linear mappings from $\reals^{2 \kappa}$ to $\reals^ \kappa $, 
which lie in general position.  
Then
$$|I(P;f_1,f_2,f_3)|\leq C\langle\|P\|_{nd}\rangle^{-\epsilon}\prod_{j=1}^3 \|f_j\|_{L^2(\R^ \kappa)},$$
for all polynomials $P:\R^{2 \kappa}\rightarrow\R$ of degree $\leq d$ and
for all functions $f_j\in L^2(\R^ \kappa)$, with constants $C,\epsilon\in\R^+$ which depend only on $\kappa, d$ and $\eta$.
\end{theorem}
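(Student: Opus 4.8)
The plan is to reduce the $\kappa$-dimensional trilinear estimate to the known one-dimensional multilinear result, Theorem~\ref{l2}, by slicing $\reals^{2\kappa}$ into a family of two-dimensional planes on each of which the three projections restrict to three generic lines in $\reals^2$. Choose coordinates so that $\pi_1(x,y)=x$, $\pi_2(x,y)=y$, and $\pi_3(x,y)=x+y$, with $(x,y)\in\reals^\kappa\times\reals^\kappa$; general position guarantees we may do this after an invertible linear change of variables, which distorts $\|P\|_{nd}$ only by bounded factors. For a unit vector $\omega\in S^{\kappa-1}$ and a base point $z$ in the orthogonal complement, restrict the integrand to the plane $\{(z+s\omega,\,w+t\omega):(s,t)\in\reals^2\}$; on such a plane $f_1,f_2,f_3$ become functions of $s$, of $t$, and of $s+t$ respectively, and $P$ becomes a polynomial $P_{z,w,\omega}(s,t)$ of degree $\le d$. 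Theorem~\ref{l2} with $m=2$, $n=3$ then yields a gain $\langle\|P_{z,w,\omega}\|_{nd}\rangle^{-\epsilon}$ on each slice, and integrating $|f_j|^2$ over the transverse directions recovers $\prod_j\|f_j\|_{L^2(\reals^\kappa)}^2$ after an application of Cauchy--Schwarz (or rather H\"older with exponents adapted to the trilinear form).

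The crux is therefore to show that if $\|P\|_{nd}$ is large in $\reals^{2\kappa}$, then $\|P_{z,w,\omega}\|_{nd}$ (the two-dimensional nondegeneracy norm) is not too small on a set of slices of non-negligible measure. Here ``nondegenerate on the slice'' means: not expressible as $q_1(s)+q_2(t)+q_3(s+t)$. The key algebraic observation is that $P$ is degenerate in $\reals^{2\kappa}$ --- i.e.\ $P=p_1(x)+p_2(y)+p_3(x+y)$ --- if and only if $P_{z,w,\omega}$ is a (one-dimensional) degenerate polynomial in $(s,t)$ for \emph{every} choice of $(z,w,\omega)$; this follows because a polynomial $Q(x,y)$ lies in the degenerate subspace precisely when all of its iterated difference-type obstructions vanish, and these obstructions localize to lines. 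Consequently the linear map $P\mapsto (\text{the function }(z,w,\omega)\mapsto [P_{z,w,\omega}]_{nd})$ has trivial kernel modulo $\P_{degen}$, so on the finite-dimensional quotient $\P(d)/\P_{degen}$ the quantity $\big(\int_{S^{\kappa-1}}\!\!\int \|P_{z,w,\omega}\|_{nd}^{2}\,dz\,dw\,d\omega\big)^{1/2}$ is a norm, hence comparable to $\|P\|_{nd}$ by equivalence of norms in finite dimensions. A Chebyshev argument then shows that $\|P_{z,w,\omega}\|_{nd}\gtrsim c\|P\|_{nd}$ on a set of $(z,w,\omega)$ of measure bounded below by a constant.

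With this in hand the proof assembles as follows. Write $I(P;f_1,f_2,f_3)$ as an iterated integral: integrate first along the two-dimensional slices, apply Theorem~\ref{l2} pointwise in the transverse parameters to gain the factor $\langle\|P_{z,w,\omega}\|_{nd}\rangle^{-\epsilon_0}$ together with the product of the three one-dimensional $L^2$ norms on the slice, then integrate the remaining transverse variables using Cauchy--Schwarz to bound the slice-norms by $\prod_j\|f_j\|_{L^2(\reals^\kappa)}$. On the subset of slices where $\|P_{z,w,\omega}\|_{nd}\gtrsim c\|P\|_{nd}$ --- which carries a fixed positive fraction of the mass --- this produces the desired gain $\langle\|P\|_{nd}\rangle^{-\epsilon}$; on the complementary set one uses the trivial bound, whose total contribution is controlled because that set is small and because one has the extra decay from a preliminary pigeonholing over the scale of oscillation. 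The cutoff $\eta$ is handled by a partition of unity subordinate to a covering of its support by slabs adapted to $\omega$, absorbing its $C^1$ norm into the constant.

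The step I expect to be the main obstacle is the algebraic/quantitative claim that the average of the slice nondegeneracy norms is comparable to $\|P\|_{nd}$: the direction ``degenerate on all slices $\Rightarrow$ globally degenerate'' requires a clean characterization of $\P_{degen}$ that is stable under restriction to lines, and one must be careful that the map $(z,w,\omega)\mapsto P_{z,w,\omega}$ interacts correctly with the choice of Hilbert-space structure defining $\|\cdot\|_{nd}$ in both dimensions. A secondary technical point is ensuring the fibration by two-dimensional planes is compatible with a \emph{single} linear change of coordinates that simultaneously normalizes all three projections --- this is exactly where the general-position hypothesis on the pairwise nullspaces is used, and it must be invoked quantitatively so that the implied constants depend only on $\kappa$ and $d$.
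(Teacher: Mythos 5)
The proposal founders on the algebraic claim you yourself flag as the ``main obstacle'': that $P$ is degenerate on $\R^{2\kappa}$ if and only if every slice $P_{z,w,\omega}$ is degenerate in $(s,t)$. The ``only if'' direction is fine, but the converse is false, and with it the whole slicing strategy collapses. Take $\kappa=2$ and
$$P(x,y)=x_1y_2-x_2y_1=\det\begin{pmatrix}x_1&y_1\\x_2&y_2\end{pmatrix}.$$
This $P$ is \emph{not} degenerate: any degenerate $p_1(x)+p_2(y)+p_3(x+y)$ has a mixed bilinear part of the form $2x^{T}Ay$ with $A$ symmetric, whereas the bilinear form $x_1y_2-x_2y_1$ is antisymmetric. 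Yet on every slice $x=z+s\omega$, $y=w+t\omega$ with $z,w\perp\omega$ and $|\omega|=1$, bilinearity of the determinant gives
$$P_{z,w,\omega}(s,t)=\det(z\,|\,w)+s\,\det(\omega\,|\,w)+t\,\det(z\,|\,\omega),$$
which is affine in $(s,t)$ and hence degenerate for \emph{every} $(z,w,\omega)$. So the proposed seminorm $\bigl(\int\!\!\int\|P_{z,w,\omega}\|_{nd}^2\,dz\,dw\,d\omega\bigr)^{1/2}$ vanishes on a nondegenerate $P$ and is not a norm on $\P(d)/\P_{degen}$; equivalence of norms cannot be invoked, and the Chebyshev step has nothing to stand on. This is not a fixable technicality: your slices are forced (up to relabeling) to be exactly of this form once you require all three of $f_1,f_2,f_3$ to become one-variable functions on the slice, so the information loss is intrinsic to the reduction.

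The paper is structured precisely to confront this phenomenon. It also begins by splitting coordinates and applying the one-dimensional Theorem~\ref{l2} on two-dimensional slices (Lemma~\ref{first}), but it does not try to prove that slice nondegeneracy averages up to global nondegeneracy, because that is false. Instead, when all slices are nearly degenerate, it decomposes $P=q_1+q_2+q_3+R$ with a small remainder $R$, reduces to test functions $f_j=e^{i\phi_j}$ by a ``swallow-the-constant'' argument, controls the phases via the structural Lemma~\ref{cousin}, and then splits $\tilde P=P_0+P^*$ and argues by cases using van der Corput in the inner variables or Theorem~\ref{l2} in the outer variables (with induction on $\kappa$ for $\kappa>2$). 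That additional machinery is exactly what is needed to handle examples like $x_1y_2-x_2y_1$, on which every two-dimensional slice is degenerate but the global integral still decays.
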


A more general result is established in \cite{snarl}, but we hope that the quite different method of the present paper
will be of some value.

If $x,y$ are real numbers, we will write $x\lesssim y$ if there exists a finite constant $C$ such that $x\leq Cy$. The constant $C$ may depend on some parameters which will be clear from the context.

\section{First reduction}

It is no loss of generality to restrict attention to the case where 
$\R^{2 \kappa}$ is identified with $\R^ \kappa_x\times\R^ \kappa_y$, and
$\pi_1(x,y)=x$, $\pi_2(x,y)=y$, and $\pi_3(x,y)=x+y$.
Indeed,
since the nullspaces of $\pi_1,\pi_2$ are transverse, we may adopt coordinates
$(x,y)\in\reals^{\kappa + \kappa}$ such that the nullspace of $\pi_1$ is $\{(0,y)\}$,
while the nullspace of $\pi_2$ is $\{(x,0)\}$.
Writing $\pi_3(x,y)=Ax+By$ where $A,B:\R^ \kappa\to\R^ \kappa $ are linear,
the transversality hypothesis implies that both $A,B$ are injective.
Therefore it is possible to make invertible changes of coordinates in $\R^\kappa_x,\R^ \kappa_y$
so that $A,B$ become the identity operator;
$\pi_3(x,y)=x+y$. 
Next, $\pi_1(x,y)=Dx$ for some invertible $D:\R^ \kappa\to\R^ \kappa $.
By making a change of variables in the range of $D$,
we may achieve $\pi_1(x,y)\equiv x$.
Finally, a corresponding change of coordinates in the range of $\pi_2$
makes $\pi_2(x,y)\equiv y$.

In order to keep the notation simple, we will discuss in detail the case $\kappa =2$,
then will indicate in \S~\ref{section:higher} how the analysis extends without additional
difficulty to arbitrary dimensions.

\section{Second reduction}
We will restrict our attention to polynomial phases of the form $\lambda P$, where $\lambda\in (0,\infty)$
and $\|P\|_{\text{nd}}=1$; for if $\|P\|_{nd}=0$, then the conclusion of Theorem \ref{thm} is trivial. In particular, $P$ will henceforth be assumed nondegenerate with respect to the projections $\{\pi_j\}_{j=1}^3$. 

In the following lemma, 
$P_{(x_2,y_2)}(x_1,y_1):=P(x_1,y_1,x_2,y_2)$,
and $\|\cdot\|_{nd}$ denotes a norm on the space of polynomials of degree $\leq d$ in $x_1$ and $y_1$ modulo degenerate polynomials with respect to the projections $(x_1,y_1)\mapsto x_1,y_1,x_1+y_1$.
We will sometimes write $I(P)$ as shorthand for $I(P;f_1,f_2,f_3)$.

Let $K\subset\reals^2_{x_2,y_2}$ be the projection of the support of $\eta$ onto the $(x_2,y_2)$ plane.
\begin{lemma}\label{first}
Let ${P}:\R^4\rightarrow\R$ be a real-valued polynomial of degree $\leq d$.
If the polynomial $(x_1,y_1)\mapsto P(x,y)$ is nondegenerate with respect to the one-dimensional projections $(x_1,y_1)\mapsto x_1,y_1,x_1+y_1$ for some $(x_2,y_2)\in\R^2$, then there exists a constant $C<\infty$ such that:
$$|I(\lambda P; f_1,f_2,f_3)|\leq C(\lambda\sup_{(x_2,y_2)\in K}\|
P_{(x_2,y_2)}\|_{nd})^{-\sigma}\prod_{j=1}^3\|f_j\|_2,$$ 
for all functions $f_j\in L^2(\R^2)$, where $\sigma$ is a constant which depends only on $d$. 
\end{lemma}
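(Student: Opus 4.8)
The plan is to reduce the four-dimensional trilinear oscillatory integral to the two-dimensional one controlled by Theorem \ref{l2}, applied fiberwise in the variables $(x_1,y_1)$, and then to exploit polynomial non-degeneracy to transfer a pointwise lower bound on $\|P_{(x_2,y_2)}\|_{nd}$ for a single point to a lower bound on the relevant quantity near that point.  First I would write $I(\lambda P;f_1,f_2,f_3)=\int_{K}\big(\int_{\R^2}e^{i\lambda P_{(x_2,y_2)}(x_1,y_1)}g_1(x_1)g_2(y_1)g_3(x_1+y_1)\,\eta(\cdots)\,dx_1dy_1\big)\,dx_2dy_2$, where for fixed $(x_2,y_2)$ the functions $g_1,g_2,g_3$ are the partial slices $x_1\mapsto f_1(x_1,x_2)$, etc.  Theorem \ref{l2}, applied with $m=2$, $n=3$ (so $n<2m$), and the three one-dimensional projections $(x_1,y_1)\mapsto x_1,y_1,x_1+y_1$ in general position, gives for each $(x_2,y_2)$ an estimate by $C\langle\lambda\|P_{(x_2,y_2)}\|_{nd}\rangle^{-\epsilon_0}\prod_j\|g_j\|_{L^2(\R)}$, with $\epsilon_0=\epsilon_0(d)$ since $m$ and $n$ are fixed.  (A minor technical point: the cutoff $\eta$ depends on all four variables, but for fixed $(x_2,y_2)$ it is a $C^1_0$ function of $(x_1,y_1)$ with uniformly controlled support and norm, so Theorem \ref{l2} applies with a uniform constant after absorbing it into one of the $g_j$ or rescaling.)

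Next I would integrate this bound over $(x_2,y_2)\in K$.  By Cauchy--Schwarz in $(x_2,y_2)$, $\int_K \prod_j\|g_j\|_{L^2(\R)}\,dx_2dy_2 \lesssim \prod_j\big(\int_K\|g_j\|_{L^2(\R)}^2\,dx_2dy_2\big)^{1/2} = \prod_j\|f_j\|_{L^2(\R^2)}$, since $K$ has finite measure and $\int_K\int_\R|f_j(x_1,x_2)|^2\,dx_1dx_2\le\|f_j\|_{L^2(\R^2)}^2$ (the variable of $f_j$ is really the pair of coordinates picked out by $\pi_j$, which after the first reduction is one of $x,y,x+y$; in each case integrating the slice-$L^2$-norm squared over $K$ is bounded by the full $L^2$ norm squared up to a constant depending on $K$).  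The oscillatory gain $\langle\lambda\|P_{(x_2,y_2)}\|_{nd}\rangle^{-\epsilon_0}$ must then be pulled out of the integral and compared with $\langle\lambda\sup_{(x_2,y_2)\in K}\|P_{(x_2,y_2)}\|_{nd}\rangle^{-\sigma}$.  This is where the real content lies.

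The main obstacle is that $\|P_{(x_2,y_2)}\|_{nd}$ can degenerate — i.e.\ vanish or become small — on a large subset of $K$, even though its supremum over $K$ is of some definite size $\delta:=\sup_{(x_2,y_2)\in K}\|P_{(x_2,y_2)}\|_{nd}$.  The hypothesis that $(x_1,y_1)\mapsto P(x,y)$ is nondegenerate for some $(x_2,y_2)$ guarantees $\delta>0$, so the bound is at least non-vacuous.  To handle this I would note that $(x_2,y_2)\mapsto\|P_{(x_2,y_2)}\|_{nd}^2$ is a polynomial in $(x_2,y_2)$ of degree at most $2d$ (the coefficients of $P_{(x_2,y_2)}$, viewed in a basis of $\P(d)/\P_{degen}$ in the $(x_1,y_1)$ variables, are polynomials in $(x_2,y_2)$, and the squared norm is a quadratic form in those coefficients); call it $Q(x_2,y_2)$, so $\|Q\|_{L^\infty(K)}\sim\delta^2$.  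By a standard sublevel-set / polynomial estimate (e.g.\ the one-variable fact that a degree-$D$ polynomial of sup-norm $1$ on an interval spends measure $\lesssim_D t^{1/D}$ in its $t$-sublevel set, applied on lines and integrated, or directly a multivariate version), the set $E_t=\{(x_2,y_2)\in K:\,Q(x_2,y_2)\le t\,\|Q\|_{L^\infty(K)}\}$ has measure $|E_t|\lesssim_{d,K} t^{c}$ for some $c=c(d)>0$.  Splitting $K=E_t\cup(K\setminus E_t)$ with $t$ a suitable power of $\langle\lambda\delta\rangle^{-1}$: on $E_t$ we use the trivial bound (no oscillatory gain) times $|E_t|$; off $E_t$ we have $\|P_{(x_2,y_2)}\|_{nd}\ge (t\delta^2)^{1/2}$ pointwise, so the oscillatory factor is $\le\langle\lambda t^{1/2}\delta\rangle^{-\epsilon_0}$.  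Optimizing the choice of $t$ balances $t^{c}$ against $(\lambda t^{1/2}\delta)^{-\epsilon_0}$ and yields the claimed bound with $\sigma=\sigma(d)>0$ a fixed power of $\epsilon_0$ and $c$; I would track that $\sigma$ indeed depends only on $d$ (through $\epsilon_0(d)$, the degree bound $2d$, and hence $c(d)$), and that all implied constants depend only on $d$, $K$, and $\eta$.
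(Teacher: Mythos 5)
Your proposal follows essentially the same route as the paper: apply Theorem~\ref{l2} fiberwise in $(x_1,y_1)$, observe that $(x_2,y_2)\mapsto\|P_{(x_2,y_2)}\|_{nd}^2=:Q(x_2,y_2)$ is a polynomial of degree $\le 2d$, invoke a sublevel-set estimate for $Q$, split $K$ into the sublevel set $E_t$ and its complement, and optimize over $t$. The structure, the choice of $Q$, and the optimization are all the same as in the paper, so this is the intended argument.

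Two small points of bookkeeping deserve attention. First, the inequality
\[
\iint_K \|f_1(\cdot,x_2)\|_2\,\|f_2(\cdot,y_2)\|_2\,\|f_3(\cdot,x_2+y_2)\|_2\,dx_2\,dy_2 \;\lesssim\; \prod_j\|f_j\|_{L^2(\R^2)}
\]
is not literally an application of Cauchy--Schwarz in $(x_2,y_2)$; since the three slice norms depend on the three different linear functionals $x_2$, $y_2$, $x_2+y_2$, the correct tool is Young's convolution inequality (giving $L^{3/2}$ norms of the slice functions), followed by H\"older on the bounded set $K$ to pass from $L^{3/2}$ to $L^2$. This is what the paper does. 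Second, on the bad set $E_t$ you say ``trivial bound times $|E_t|$,'' but that alone would produce $L^\infty$ norms of the $f_j$ rather than $L^2$ norms. The paper instead applies H\"older with exponents $4$ and $4/3$ to split off $|E_t|^{1/4}$ and then applies Young to $g_j^{4/3}$ in $L^{3/2}$, which recovers precisely $\prod_j\|g_j\|_2$. These are corrections of detail, not missing ideas; the overall argument you outline is correct.
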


\begin{proof}
We can apply Theorem \ref{l2} with $m=2$ and $n=3$ to conclude that
$$J_{x_2,y_2}(\lambda P)
:=\iint_{\R^2}e^{i\lambda P(x_1,y_1,x_2,y_2)}
f_1(x_1,x_2)f_2(y_1,y_2)f_3(x_1+y_1,x_2+y_2)\eta(x_1,y_1,x_2,y_2)dx_1dy_1$$
satisfies
\begin{align*}
|J_{x_2,y_2}(\lambda P)|\leq& C(1+\lambda^2|Q(x_2,y_2)|)^{-\rho}\|f_1(\cdot,x_2)\|_2\|f_2(\cdot,y_2)\|_2\|f_3(\cdot,x_2+y_2)\|_2\\
=&C(1+\lambda^2|Q(x_2,y_2)|)^{-\rho}g_1(x_2)g_2(y_2)g_3(x_2+y_2),
\end{align*}
for some $\rho>0$ depending only on $d$,
where $g_j(t)=\|f_j(\cdot,t)\|_2$
and $Q(x_2,y_2)=\|P_{(x_2,y_2)}(\cdot)\|_{\text{nd}}^2$
is a polynomial of degree $\leq 2d$. 

For $\epsilon>0$, let 
$$E_\epsilon:=\{(x,y)\in K: |Q(x,y)|<\epsilon\}.$$
A basic sublevel set estimate \cite{BG} yields
$$|E_\epsilon|\leq C\|Q\|_{L^{\infty}(K)}^{-\delta'}\epsilon^{\delta'}\;\;\textrm{ for }\delta'=\frac{1}{\textrm{deg} (Q)}$$
and some absolute constant $C<\infty$ if $Q$ has positive degree.
We now split the original integral $I(\lambda P;f_1,f_2,f_3)$ 
into two pieces and estimate each of them separately. On the one hand, H\"{o}lder's and Young's inequalities imply:
\begin{align*}
\iint_{E_\epsilon}|J_{x_2,y_2}(\lambda P)|dx_2dy_2\leq&C\iint_{E_\epsilon}g_1(x_2)g_2(y_2)g_3(x_2+y_2)dx_2dy_2\\
\leq&C|E_\epsilon|^{1/4}\Big(\iint_{\R^2}g_1^{4/3}(x_2)g_2^{4/3}(y_2)g_3^{4/3}(x_2+y_2)dx_2dy_2\Big)^{3/4}\\
\leq& C|E_\epsilon|^{1/4}\prod_{j=1}^3\|g_j^{4/3}\|_{3/2}^{3/4}\\
=&C|E_\epsilon|^{1/4}\prod_{j=1}^3\|g_j\|_2
\\
\leq & C\|Q\|_{L^{\infty}(K)}^{-\delta}\epsilon^{\delta}\prod_{j=1}^3\|f_j\|_2.
\end{align*}
On the other hand,
\begin{align*}
\iint_{\R^2\setminus E_\epsilon}|J_{x_2,y_2}(\lambda P)|dx_2dy_2\leq& C(1+\lambda^2\epsilon)^{-\rho}\iint_{\R^2}g_1(x_2)g_2(y_2)g_3(x_2+y_2)dx_2dy_2\\
\leq& C(1+\lambda^2\epsilon)^{-\rho}\prod_{j=1}^3\|g_j\|_{3/2}\\
\leq&C(1+\lambda^2\epsilon)^{-\rho}\prod_{j=1}^3\|f_j\|_{2}.\\
\end{align*}
If $Q$ has degree zero then the same conclusion is reached more simply, with $\epsilon^{\delta'}$
replaced by $1$.
Thus
$$|I(\lambda P; f_1,f_2,f_3)|\leq C\Big[\|Q\|_{L^{\infty}(K)}^{-\delta}\epsilon^{\delta}+(\lambda^2\epsilon)^{-\rho}\Big]\prod_{j=1}^3\|f_j\|_2.$$

\noindent Since $\|Q\|_{L^\infty(K)}=\sup_{(x_2,y_2)\in K}\|P_{(x_2,y_2)}\|^2_{nd}$, optimizing in $\epsilon$ yields an upper bound 

$$|I(\lambda P; f_1,f_2,f_3)|\leq C\Big(\lambda\sup_{(x_2,y_2)}\|P_{(x_2,y_2)}\|_{nd} \Big)^{-\frac{2\rho\delta}{\rho+\delta}}\prod_{j=1}^3\|f_j\|_2.$$
\end{proof}


\section{Third reduction}
The goal of this step is to show that it is enough to consider functions of the form $f_j(u_1,u_2)=e^{i\phi_j(u_1,u_2)}$, where $\phi_j$ has polynomial dependence on $u_1$ of bounded degree and is real-valued.

From the last section, we get the desired decay rate unless $P$ is ``almost degenerate'' with respect to the projections $(x_1,y_1)\mapsto x_1,y_1,x_1+y_1$ for almost every $(x_2,y_2)\in\R^2$, in the sense that
\begin{equation*}
\sup_{(x_2,y_2)\in K}\|P_{(x_2,y_2)}\|_{nd}\lesssim\lambda^{-1+\tau}\textrm{ for some }\tau>0.
\end{equation*}
We have the freedom to choose $\tau$ arbitrarily small later on in the argument. 

In this case, one can decompose 
$$P(x,y)=q_1(x_1,x_2,y_2)+q_2(y_1,x_2,y_2)+q_3(x_1+y_1,x_2,y_2)+R(x,y),$$
for some measurable functions $q_j$ and $R$ which are polynomials of degree $\leq d$ in $x_1$ and $y_1$, and where the remainder $R$ satisfies
$$|R(x,y)|\lesssim\lambda^{-1+\tau}\textrm{ if } (x,y)\in K'$$
for any fixed compact set $K'\subset\reals^{4}$.
To justify this, for each integer $k\ge 0$ choose
some Hilbert space norm  for the vector space of all homogeneous polynomials in $x_1$ and $y_1$ of degree $k$. 
Write $P_{(x_2,y_2)}(x_1,y_1)=P(x,y)$.
Express
$P(x,y)=\sum_{k=0}^d P_{k,(x_2,y_2)} (x_1,y_1)$ where $P_{k,(x_2,y_2)}$
is a homogeneous polynomial of degree $k$ in $(x_1,y_1)$,
whose coefficients are polynomials in $(x_2,y_2)$.
Now we use two facts  implicitly shown in \cite{CLTT}.
Firstly, if $p=\sum_k p_k$ is a decomposition of a polynomial in $(x_1,y_1)$ into its homogeneous summands of degree $k$,
then
$\sum_k \|p_{k}\|_{\text{nd}}$ is comparable to  $\|p\|_{\text{nd}}$.
Secondly,
if $p(x_1,y_1)$ is homogeneous of degree $k$, then for any $d\ge k$,
the norm of $p$ in the space of all homogeneous polynomials of degree $k$
modulo polynomials $cx_1^k+c'y_1^k+c''(x_1+y_1)^k$
is comparable to the norm $\|p\|_{\text{nd}}$
of $p$ in the space of all polyomials of degrees $\le d$ modulo all
degenerate polynomials of degrees $\le d$,
where degenerate polynomials are those which are sums of polynomials
in $x_1$, polynomials in $y_1$, and polynomials in $x_1+y_1$.
Qualitative versions of these two facts were established in \cite{CLTT};
the quantitative versions stated here follow from the equivalence of
all norms in any finite-dimensional vector space.

For each $k$, the projection $Q_{k,(x_2,y_2)}$ of $P_{k,(x_2,y_2)}$
onto the span of $x_1^k,y_1^k,(x_1+y_1)^k$ 
has polynomial dependence on $(x_2,y_2)$.
Moreover, all coefficients of $P_{k,(x_2,y_2)}-Q_{k,(x_2,y_2)}$ are $O(\lambda^{-1+\tau})$ for $(x_2,y_2)\in K$,
and therefore for $(x_2,y_2)$ in any fixed bounded set.
For $k\ge 2$, $Q_{k,(x_2,y_2)}$ decomposes uniquely as $q_{1,k}(x_2,y_2)x_1^k+q_{2,k}(x_2,y_2)y_1^k+q_{3,k}(x_2,y_2)(x_1+y_1)^k$;
these coefficients $q_{i,k}$ continue to have polynomial dependence on $(x_2,y_2)$.
For $k=1$ there is likewise a unique such decomposition, with the additional condition $q_{3,k}\equiv 0$,
and for $k=0$, with two additional conditions $q_{2,k}\equiv q_{3,k}\equiv 0$.
Recombining terms gives the claim.






Let us use this to work with $J_{x_2,y_2}$ (a similar calculation occurs in p.\ 15 of \cite{CLTT}):
\begin{align*}
J_{x_2,y_2}(\lambda P)=&\iint_{\R^2}e^{i\lambda P(x,y)}f_1(x)f_2(y)f_3(x+y)\eta(x,y)dx_1dy_1\\
=&\iint_{\R^2}e^{i\lambda q_1(x_1,x_2,y_2)}f_1(x_1,x_2)e^{i\lambda q_2(y_1,x_2,y_2)}f_2(y_1,y_2)
\\
& \qquad\qquad\qquad\qquad \cdot 
e^{i\lambda q_3(x_1+y_1,x_2,y_2)}f_3(x_1+y_1,x_2+y_2)e^{i\lambda R}\eta dx_1dy_1\\
=&\iint_{\R^2}g_1(x_1)g_2(y_1)g_3(x_1+y_1)\zeta(x_1,y_1) dx_1dy_1\\
=& C\iint\Big(\int\widehat{g}_1(\xi_1)e^{ix_1\xi_1}d\xi_1\Big)g_2(y_1)g_3(x_1+y_1)
\\
& \qquad\qquad\qquad\qquad \cdot
\Big(\iint\widehat{\zeta}(\xi_2,\xi_3)e^{i(x_1,y_1)\cdot(\xi_2,\xi_3)}d\xi_2d\xi_3\Big) dx_1dy_1\\
=& C\iiint\widehat{g}_1(\xi_1)\widehat{\zeta}(\xi_2,\xi_3)\Big[\int g_2(y_1)e^{iy_1\xi_3}\Big(\int g_3(x_1+y_1)e^{ix_1(\xi_1+\xi_2)}dx_1\Big)dy_1\Big] d\xi_1d\xi_2d\xi_3\\
=& C\iiint \widehat{g}_1(\xi_1)\widehat{g}_2(\xi_1+\xi_2-\xi_3)\widehat{g}_3(-\xi_1-\xi_2)\widehat{\zeta}(\xi_2,\xi_3)d\xi_1d\xi_2d\xi_3.\\
\end{align*}

Implicit in this notation is the dependence of the functions $g_j:=e^{i\lambda q_j}f_j$ and $\zeta:=e^{i\lambda R}\eta$ on $x_2$ and $y_2$.

Since $\lambda R$ is a polynomial in $x_1$ and $y_1$ of bounded degree which is $O(\lambda^{\tau})$ on supp$(\eta)$ and the same holds for all its derivatives, we have that
$$|\widehat{\zeta}(\xi)|\leq C_{n,\eta}\lambda^{n\tau}(1+|\xi|)^{-n},\;\forall\xi\in\R^2,\;\forall n\in\N,$$

\noindent provided $\eta\in C^n$. 
In particular, if\footnote{We lose no generality in assuming this extra smoothness on $\eta$: by the usual decomposition of a compactly supported H\"{o}lder continuous function $\zeta=f+g$ into a smooth part $f$ such that $\|f\|_{C^n}=O(\lambda^{C_n\delta})$ and a bounded remainder $g$ such that $\|g\|_\infty=O(\lambda^{-\delta})$, it is easy to see that if the result holds for some $\eta\in C_0^n$ ($n\in\N$) with a constant $C=O(\|\eta\|_{C^n})$, then it will continue to hold 
for all $\eta$ which are compactly supported and H\"older continuous of order $\alpha$.}
$\eta\in C_0^3(\R^4)$, then 
$$|\widehat{\zeta}(\xi)|\leq C\lambda^{3\tau}(1+|\xi|)^{-3}, \forall \xi\in\R^2.$$ 
We use this (together with Cauchy-Schwarz and Plancherel) to conclude that

\begin{align*}
|J_{x_2,y_2}(\lambda P)|\leq& C\lambda^{3\tau}\|\widehat{g}_1\|_{\infty}\iiint\frac{|\widehat{g}_2(\xi_1+\xi_2-\xi_3)||\widehat{g}_3(-\xi_1-\xi_2)|}{(1+|(\xi_2,\xi_3)|)^3}d\xi_1d\xi_2d\xi_3\\
\leq& C\lambda^{3\tau} \|\widehat{g}_1\|_{\infty}\|g_2\|_2\|g_3\|_2.
\end{align*}

Let $\delta>3\tau$ and consider the set:
$$F:=\{(x_2,y_2)\in\R^2: \|\widehat{g}_1\|_{\infty}\lesssim\lambda^{-\delta}\}.$$
There are two possibilities: 
\begin{itemize}
\item[(i)] If $|F^{\complement}|\lesssim\lambda^{-\delta}$, then $|I(\lambda P)|\lesssim\lambda^{-(\delta-3\tau)}$, as is easily seen by splitting the integral 
$$I(\lambda P)=\iint_{\R^2}J_{x_2,y_2}(\lambda P)dx_2dy_2$$ 
into the regions $F$ and $F^\complement$. 
\item[(ii)] If $|F^\complement|\gtrsim\lambda^{-\delta}$, we set $E:=F^\complement$. Note that $\|\widehat{g}_1\|_{\infty}\gtrsim\lambda^{-\delta}$ for every $(x_2,y_2)\in E$. 
\end{itemize}
Since condition (i) yields the desired decay, we 
restrict attention henceforth to the case in which
condition (ii) holds. Then there exists a measurable subset $E\subset\R^2$ such that $|E|\gtrsim \lambda^{-\delta} $ and $\|\widehat{g}_1\|_{\infty}\gtrsim\lambda^{-\delta}$ for every $(x_2,y_2)\in E$. 
We still have the freedom to choose $\delta>0$ as small as we wish later on in the argument.

Why is this conclusion of interest? Since

$$\lambda^{-\delta}\lesssim\|\widehat{g}_1\|_{\infty}=\sup_\xi\Big|\int_\R e^{i\lambda q_1(x_1,x_2,y_2)}f_1(x_1,x_2)e^{-ix_1\xi}dx_1\Big|,$$
we can find measurable functions $\theta$ and $\widetilde{\theta}$ such that, for $(x_2,y_2)\in E$, 

\begin{align*}
\lambda^{-\delta}\lesssim&\Big|\int e^{i\lambda q_1(x_1,x_2,y_2)}f_1(x_1,x_2)e^{-ix_1\theta(x_2,y_2)}dx_1\Big|\\
=&e^{-i\widetilde{\theta}(x_2,y_2)}\int f_1(x_1,x_2)e^{i\lambda q_1(x_1,x_2,y_2)-ix_1\theta(x_2,y_2)}dx_1.\\
\end{align*}

Because we are working in a fixed bounded region, there exist a measurable subset $E_1\subset\R$ such that $|E_1|\gtrsim\lambda^{-\delta}$ and a single number $\overline{y_2}\in\R$ such that for each $x_2\in E_1$, $(x_2,\overline{y_2})\in E$. 
Thus
\begin{align*}
\lambda^{-\delta}\lesssim&\; e^{-i\widetilde{\theta}(x_2,y_2)}\int f_1(x_1,x_2)e^{i\lambda q_1(x_1,x_2,y_2)-ix_1\theta(x_2,y_2)}
dx_1
\\
=&\int f_1(x_1,x_2)e^{i\varphi_1(x_1,x_2)}dx_1\;\;\;\textrm{ if }x_2\in E_1
\end{align*}
where 
\[\varphi_1(x_1,x_2)=\lambda q_1(x_1,x_2,\bar y_2)-x_1\theta(x_2,\bar y_2)-\tilde\theta(x_2,\bar y_2)\]
is a real-valued  polynomial in $x_1$ of degree $\le d$,
whose coefficients are measurable functions of $x_2$.

We would like to use this to conclude that $f_1$ has reasonably large inner product with $e^{-i\varphi_1}$. While this is not necessarily true, the following {\em extension argument} will prove sufficient for our purposes: for every $x_2\in\R$, choose $\theta^*(x_2)$ in a measurable way and such that 
$$e^{i\theta^*(x_2)}\int_\R f_1(x_1,x_2)e^{i\varphi_1(x_1,x_2)}dx_1\geq 0.$$
We can guarantee that $\theta^*\equiv 0$ on $E_1$.

Define $\phi_1(x_1,x_2):=\theta^*(x_2)+\varphi_1(x_1,x_2)$. Then 
$\phi_1$ is likewise a real-valued polynomial in $x_1$ of degree $\le d$,
whose coefficients are measurable functions of $x_2$. 
Now
$$\int_\R f_1(x_1,x_2)e^{i\phi_1(x_1,x_2)}dx_1\geq 0\;\;\;\textrm{ for every }x_2\in \R$$
while for any $x_2\in E_1$,
$$\int_\R f_1(x_1,x_2)e^{i\phi_1(x_1,x_2)}dx_1
\gtrsim \lambda^{-\delta}.$$
Therefore
since $|E_1|\gtrsim \lambda^{-\delta} $,
\begin{equation*}
|\langle f_1,e^{-i\phi_1}\rangle|\gtrsim\lambda^{-2\delta}.
\end{equation*}
Since $\|f_1\|_{L^2}=1$
and $f_1$ is supported in a fixed bounded set,
\begin{equation}\label{1}
\|f_1-\langle f_1, e^{-i\phi_1}\rangle e^{-i\phi_1}\|_2^2\leq (1-C\lambda^{-4\delta}).
\end{equation}

Let $A(\lambda)$ be the best constant in the inequality
$$|I(\lambda P; f_1,f_2,f_3)|\leq A(\lambda)\|f_1\|_{L^2}\|f_2\|_{L^2}\|f_3\|_{L^2}.$$ That $A(\lambda)$ is finite is an immediate consequence of the dual form of Young's convolution inequality and the fact that, in this context, $L^2\subset L^{3/2}$.
Now \eqref{1} implies
\begin{align*}
|I(\lambda P; f_1,f_2,f_3)|=&|I(\lambda P;f_1-\langle f_1, e^{-i\phi_1}\rangle e^{-i\phi_1},f_2,f_3)+I(\lambda P; \langle f_1, e^{-i\phi_1}\rangle e^{-i\phi_1},f_2,f_3)|\\
\leq& A(\lambda)\|f_1-\langle f_1, e^{-i\phi_1}\rangle e^{-i\phi_1}\|_2\|f_2\|_2\|f_3\|_2+|\langle f_1, e^{-i\phi_1}\rangle||I(\lambda P; e^{-i\phi_1},f_2,f_3)|\\
\leq& (1-C\lambda^{-4\delta})^{1/2}A(\lambda)+C|I(\lambda P; e^{-i\phi_1},f_2,f_3)|,
\end{align*}
Therefore
\begin{equation*}
A(\lambda)\le
(1-C\lambda^{-4\delta})^{1/2}A(\lambda)+C\sup_{\phi_1,f_2,f_3}|I(\lambda P; e^{-i\phi_1},f_2,f_3)|
\end{equation*}
where the supremum is taken over all functions $f_2,f_3$ supported in the specified regions
satisfying $\|{f_j}\|_{L^2}=1$,
and over all real-valued functions $\phi_1(x_1,x_2)$ which are polynomials
of degree $\le d$ with respect to $x_1$, with coefficients depending measurably on $x_2$.
Since $A(\lambda)<\infty$, it follows that
\begin{equation} \label{swallow}
A(\lambda)\le
C\lambda^{4\delta}\sup_{\phi_1,f_2,f_3}|I(\lambda P; e^{-i\phi_1},f_2,f_3)|.
\end{equation}
Therefore it suffices to prove that
\[|I(\lambda P; e^{-i\phi_1},f_2,f_3)|\le C\lambda^{-\eps}\|f_2\|_{L^2}\|f_3\|_{L^2}\]
for a certain $\eps>0$; for $\delta$ may then be chosen to equal $\eps/5$.

By repeating the above steps for
$g_2$ and $g_3$,
we conclude that it suffices to prove that
there exists $\eps>0$ such that
\begin{equation} \label{progress}
|I(\lambda P; e^{i\phi_1},e^{i\phi_2},e^{i\phi_3})|
\le C\lambda^{-\eps}
\end{equation}
uniformly for all $\lambda\ge 1$, all polynomials $P$
satisfying $\|P\|_{\text{nd}}=1$,
and all real-valued measurable functions $\phi_j(u_1,u_2)$
which are polynomials of degree $\le d$ with respect to $u_1$.

\section{Handling remainders}

In the last section we have reduced matters to the case where the $f_j$ are of the special form

\begin{displaymath}
\left\{ \begin{array}{ll}
f_1(x_1,x_2)=e^{i\phi_1(x_1,x_2)}\\
f_2(y_1,y_2)=e^{i\phi_2(y_1,y_2)}\\
f_3(x_1+y_1,x_2+y_2)=e^{i\phi_3(x_1+y_1,x_2+y_2)}.\\
\end{array} \right.
\end{displaymath}
where $\phi_j$ are partial polynomials in the sense described following \eqref{progress}.
Express
\begin{displaymath}
\left\{ \begin{array}{ll}
\phi_1(x_1,x_2)=\sum_{j=0}^d \theta_{1,j}(x_2)x_1^j\\
\phi_2(y_1,y_2)=\sum_{k=0}^d \theta_{2,k}(y_2)y_1^k\\
\phi_3(x_1+y_1,x_2+y_2)=\sum_{l=0}^d \theta_{3,l}(x_2+y_2)(x_1+y_1)^l\\
\end{array} \right.
\end{displaymath}
where
$\theta_{1,j}$, $\theta_{2,k}$,  $\theta_{3,l}$ are measurable and real-valued.
Also express $P(x,y)=\sum_{j,k} p_{jk}(x_2,y_2)x_1^jy_1^k$. Then
\begin{align*}
I(\lambda P;f_1,f_2,f_3)
=&\iint e^{i\lambda P}e^{i\phi_1}e^{i\phi_2}e^{i\phi_3}\eta dxdy\\
=&\iint\Big(\iint e^{i\sum_{j,k}\psi_{jk}(x_2,y_2)x_1^jy_1^k}\eta dx_1dy_1\Big)dx_2dy_2,
\end{align*}
where
\begin{displaymath}
\psi_{jk}(x_2,y_2)= \left\{ \begin{array}{ll}
\theta_{1,j} (x_2)& \textrm{if $k=0$}\\
0 & \textrm{if $k\neq0$}
\end{array} \right\}
+\left\{ \begin{array}{ll}
\theta_{2,k}(y_2) & \textrm{if $j=0$}\\
0 & \textrm{if $j\neq0$}
\end{array} \right\}
+{j+k\choose k}\theta_{3,j+k}(x_2+y_2)
+\lambda p_{jk}(x_2,y_2)
.\end{displaymath}

The desired bound $|I(\lambda P; e^{i\phi_1},e^{i\phi_2},e^{i\phi_3})|\le C\lambda^{-\eps}$
follows directly from Theorem~\ref{sp}, unless
there exists 
a measurable subset $E\subset \R^2$ of measure 
$|E|\gtrsim\lambda^{-\delta}$  such that
\begin{equation}\label{psi}
\sum_{(j,k)\ne (0,0)} |\psi_{jk}(x_2,y_2)|\lesssim\lambda^r,\;\;\; \forall (x_2,y_2)\in E.
\end{equation}
We may choose $\delta,r>0$ to be as small as may be desired for later purposes,
at the expense of taking $\eps$ sufficiently small in \eqref{progress}.

The proof of the following lemma will be given later.
\begin{lemma}\label{cousin}
Let $P:\R^2\rightarrow\R^D$ be a real vector-valued polynomial of degree $d$, and let $f,g:[0,1]\rightarrow\R^D$ be measurable functions. 
Let $E\subseteq [0,1]^2$ be a measurable subset of the unit square of Lebesgue measure $|E|=\epsilon>0$. Assume that 
\begin{equation}\label{3}
|f(x)+g(y)+P(x,y)|\leq 1\textrm{ for all }(x,y)\in E.
\end{equation}
Then there exist $\R^D$--valued polynomials 
$Q_1$ and $Q_2$ of degrees $\leq d$ and measurable sets $E_1,E_2\subseteq[0,1]$ such that 
\begin{displaymath}
\left\{ \begin{array}{ll}
|f(x)-Q_1(x)|\lesssim \epsilon^{-C}\textrm{ for }x\in E_1\\
|g(y)-Q_2(y)|\lesssim \epsilon^{-C}\textrm{ for }y\in E_2\\
|E_1|\geq c\epsilon, \;\; |E_2|\geq c\epsilon.\\
\end{array} \right.
\end{displaymath}
The constants $c,C\in\R^+$ depend only on $d$.
\end{lemma}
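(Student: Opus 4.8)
The plan is to exploit the product structure of $E$ by a pigeonholing/slicing argument, reducing the two-variable hypothesis \eqref{3} to many one-variable statements which can be combined. First I would observe that, since $|E|=\epsilon$, by Fubini there is a set $X_0\subseteq[0,1]$ of $x$-values with $|X_0|\gtrsim\epsilon$ such that for each $x\in X_0$ the slice $E_x:=\{y:(x,y)\in E\}$ has measure $|E_x|\gtrsim\epsilon$. Fix one good value $x=a\in X_0$. Then \eqref{3} at $x=a$ gives $|g(y)+P(a,y)|\lesssim 1+|f(a)|$ for all $y$ in a set of measure $\gtrsim\epsilon$; this already shows $g$ agrees, up to an additive error controlled by $|f(a)|$, with the polynomial $y\mapsto -P(a,y)$ on a large set. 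The difficulty is that $|f(a)|$ is not a priori bounded. To control it, I would pick a second good value $x=b\in X_0$ with $|b-a|$ not too small (possible since $|X_0|\gtrsim\epsilon$, so $X_0$ is not concentrated near a point), and use the difference of the two inequalities: for $y$ in the intersection $E_a\cap E_b$ — which has measure $\gtrsim\epsilon$ provided we chose $X_0$ a bit more carefully, or more robustly by a second Fubini pass — we get $|f(a)-f(b)+P(a,y)-P(b,y)|\lesssim 1$, and hence bounds on symmetric functions of the $f$-values at a well-spread finite set of points.

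More systematically, the clean route is: choose $N=d+1$ (or $2d+1$ to be safe) points $a_0<a_1<\cdots<a_N$ in $X_0$ that are \emph{quantitatively separated}, with pairwise gaps $\gtrsim\epsilon$; this is possible because $|X_0|\gtrsim\epsilon$. For each pair $(i,j)$ and each $y$ in a common slice (of measure $\gtrsim\epsilon^2$ after intersecting, or $\gtrsim \epsilon$ by choosing the $a_i$ from a slightly shrunk set), the differenced inequality bounds $|f(a_i)-f(a_j)|$ by $1+\sup|P|\lesssim 1$. Combined with a single anchor, this does \emph{not} bound each $|f(a_i)|$ individually — but it bounds all the \emph{differences}, and that is enough: the Lagrange interpolation polynomial $Q_1$ through the data $(a_i,f(a_i))$ has coefficients that are fixed linear combinations (with coefficients bounded by $\epsilon^{-C}$, coming from the inverse Vandermonde of the $\epsilon$-separated nodes) of the $f(a_i)$; after subtracting the value $f(a_0)$ from each node value, the interpolant through $(a_i,f(a_i)-f(a_0))$ has coefficients $O(\epsilon^{-C})$, and adding back the constant $f(a_0)$ changes only the constant term. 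So $Q_1(x):=f(a_0)+[\text{interpolant of }f(a_i)-f(a_0)]$ is a degree-$\le d$ polynomial with $|Q_1(x)|$'s \emph{non-constant part} of size $O(\epsilon^{-C})$ on $[0,1]$.

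Now the punchline. For $x\in X_0$ (a set of measure $\gtrsim\epsilon$), on the slice $E_x$ we have $|f(x)+g(y)+P(x,y)|\le 1$; subtracting the same inequality evaluated along the Lagrange construction — equivalently, using that $f(a_i)-Q_1(a_i)=0$ and that for generic $x\in X_0$ the value $f(x)$ satisfies, via differencing against some $a_i$ on a common slice, $|f(x)-f(a_i)|\lesssim 1$ — we obtain $|f(x)-Q_1(x)|\le |f(x)-f(a_i)| + |f(a_i)-Q_1(a_i)| + |Q_1(a_i)-Q_1(x)| \lesssim 1 + 0 + \epsilon^{-C}\lesssim\epsilon^{-C}$ for all $x$ in a set $E_1\subseteq X_0$ with $|E_1|\gtrsim\epsilon$; here one must arrange that each such $x$ shares a slice of positive measure with one of the anchors $a_i$, which follows from a final application of Fubini to the set $\{(x,x')\in X_0\times X_0: |E_x\cap E_{x'}|\gtrsim\epsilon\}$. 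Then, fixing any one such $x\in E_1$ and returning to \eqref{3}, we get $|g(y)+P(x,y)|\le 1+|f(x)|\le 1+|Q_1(x)|+\epsilon^{-C}\lesssim\epsilon^{-C}$ for $y\in E_x$, so $Q_2(y):=-P(x,y)$ and $E_2:=E_x$ work.

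The main obstacle, and the step requiring care, is the bookkeeping that simultaneously (a) keeps the anchor nodes $a_i$ quantitatively $\epsilon$-separated so that the inverse Vandermonde — hence the coefficients of $Q_1$ — are only polynomially large in $\epsilon^{-1}$, and (b) guarantees that the "generic" $x\in E_1$ shares a slice of measure $\gtrsim\epsilon$ with at least one anchor, so that the differencing bound $|f(x)-f(a_i)|\lesssim 1$ is available. Both are pure measure-theoretic pigeonholing (iterated Fubini on the square), and neither uses anything about $P$ beyond $\deg P\le d$ and $\sup_{[0,1]^2}|P|\lesssim 1$ on $E$ — though in fact we only ever use $|P(a_i,y)-P(a_j,y)|$ and $|P(x,y)|$ bounds, which hold wherever \eqref{3} is assumed. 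The powers $c,C$ depend only on $d$ through $N=O(d)$ and the Vandermonde estimate for $d$-separated nodes.
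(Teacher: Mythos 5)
Your interpolation argument breaks at the step where you assert "the differenced inequality bounds $|f(a_i)-f(a_j)|$ by $1+\sup|P|\lesssim 1$." Nothing in the hypotheses bounds $\sup_{[0,1]^2}|P|$, and the differences $|f(a_i)-f(a_j)|$ need not be bounded either. Take $D=1$, $P(x,y)=Mx$ with $M$ enormous, $f(x)=-Mx$, $g\equiv 0$, $E=[0,1]^2$: then \eqref{3} holds with $\epsilon=1$, yet $|f(a_i)-f(a_j)|=M|a_i-a_j|$ is as large as you like. In this example the right $Q_1$ is $-Mx$, with error $0$, but your chain $|f(x)-Q_1(x)|\le |f(x)-f(a_i)|+0+|Q_1(a_i)-Q_1(x)|$ insists on passing through two individually huge terms, and the Vandermonde bookkeeping gives $Q_1$ with coefficients of size $M\epsilon^{-C}$ that you then have no means to cancel. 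The paper avoids this precisely by \emph{first} normalizing $P$: a sublevel-set estimate (derived from bilinear oscillatory-integral decay) applied to $P$ modulo polynomials of the form $p(x)+q(y)$ shows that $|E|=\epsilon>0$ forces $\sup_{[0,1]^2}|P-p-q|\lesssim\epsilon^{-1/\gamma}$ for suitable $p,q$ of degree $\le d$. That replacement of $P$ by $P-p-q$ is exactly the missing justification for your "$\sup|P|\lesssim1$"; once done, the problem is essentially the $P\equiv 0$ case, and the paper finishes with a purely measure-theoretic two-variable pigeonhole lemma (Lemma~\ref{frust}) rather than interpolation.

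You were, however, one small adjustment away from a correct proof, and in fact a shorter one than the paper's. In your last paragraph you fix $x\in E_1$, set $Q_2(y):=-P(x,y)$, and then struggle to bound the leftover constant $|f(x)|$ — but you should simply absorb that constant into $Q_2$. Concretely: by Fubini pick $y_0$ with $|E^{y_0}|\ge\epsilon$, where $E^{y_0}=\{x:(x,y_0)\in E\}$, and set $E_1:=E^{y_0}$ and $Q_1(x):=-g(y_0)-P(x,y_0)$, a polynomial of degree $\le d$ in $x$ with values in $\R^D$. Then for every $x\in E_1$ one has $(x,y_0)\in E$ and hence $|f(x)-Q_1(x)|=|f(x)+g(y_0)+P(x,y_0)|\le 1$. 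Symmetrically pick $x_0$ with $|E_{x_0}|\ge\epsilon$ and take $Q_2(y):=-f(x_0)-P(x_0,y)$, $E_2:=E_{x_0}$. This proves the lemma with $c=1$ and error $\le 1$ (so any $C>0$ works), with no interpolation, no separated anchors, no Vandermonde inverse, and no need to estimate $P$, $f$, or $g$ anywhere; the entire apparatus of your first two paragraphs becomes unnecessary once the free constant is allowed to live inside $Q_j$ instead of being estimated on its own.
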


The phase estimates \eqref{psi}, together with Lemma~\ref{cousin}, allow us to control most of the terms $\theta_i$. Letting $k=0$, we have that
$$|\psi_{j0}(x_2,y_2)|=|\theta_{1,j}(x_2)+\theta_{3,j}(x_2+y_2)+\lambda p_{j0}(x_2,y_2)|\lesssim\lambda^r$$
if $1\leq j\leq d$ and $(x_2,y_2)\in E$. Since $|E|\gtrsim\lambda^{-\delta}$, Lemma \ref{cousin} implies that, for every $1\leq j\leq d$, there exists a real-valued polynomial $\widetilde{Q}_{1,j}$ of degree $\leq d$ such that
$$|\lambda^{-r}\theta_{1,j}(x_2)-\widetilde{Q}_{1,j}(x_2)|\lesssim (\lambda^{-\delta})^{-C}$$
whenever $x_2\in E_1$; here, $E_1\subset\R$ is a measurable subset which does not depend on $j$ and such that $|E_1|\gtrsim\lambda^{-\delta}$.
A similar conclusion can be drawn for each of the terms $\theta_{3,l}$ with $1\leq l\leq d$. Choosing $j=0$ we control the terms $\theta_{2,k}$ for $1\leq k\leq d$ in an analogous way. 

We conclude that, for every $1\leq j,k,l\leq d$,
\begin{displaymath}
\left\{\begin{array}{ll}
\theta_{1,j}(x_2)&=Q_{1,j}(x_2)+\widetilde{R_{1,j}}(x_2)\\
\theta_{2,k}(y_2)&=Q_{2,k}(y_2)+ \widetilde{R_{2,k}}(y_2)\\
\theta_{3,l}(x_2+y_2)&=Q_{3,l}(x_2+y_2)+ \widetilde {R_{3,l}}(x_2+y_2)\\
\end{array}\right.
\end{displaymath}
where $Q_{1,j}$, $Q_{2,k}$ and $Q_{3,l}$ are polynomials of degree $\leq d$, and the remainders satisfy 
\begin{displaymath}
\left\{\begin{array}{ll}
| \widetilde {R_{1,j}}(x_2)|\lesssim \lambda^{\beta}&\textrm{ if }x_2\in E_{1}, 
\\
| \widetilde {R_{2,k}}(y_2)|\lesssim \lambda^{\beta}&\textrm{ if }y_2\in E_{2}, 
\\
| \widetilde {R_{3,l}}(x_2+y_2)|\lesssim \lambda^{\beta}&\textrm{ if }x_2+y_2\in E_{3}, 
\end{array}\right.
\end{displaymath}
for certain measurable subsets $E_{1},E_{2},E_{3}\subset\R$
which satisfy
\[
|E_i|\gtrsim\lambda^{-\delta}.
\]
The parameter $\beta:={r+C\delta}$ is a function of $r,\delta$ and the constant $C=C(d)$ from Lemma \ref{cousin}.

We have estimates for the remainders $R_i$ in rather small sets only, 
but it is possible to reduce to the case in which these estimates hold globally,
via an extension argument similar to the one used in the previous section.
Set $Q_1(x)=\sum_{j=1}^d Q_{1,j}(x_2)x_1^j$ and $\widetilde {R_1}(x)=\sum_{j=1}^d \widetilde {R_{1,j}}(x_2)x_1^j$.
By modifying each $\widetilde {R_{1,j}}$ suitably at each point of the complement of $E_1$,
we produce a function $\Phi_1 = \theta_{1,0}+Q_1+ R_1$ 
such that $\theta_{1,0}$ is a measurable and real-valued function of $x_2$, $Q_1(x)$ is a polynomial function of $x\in\R^2$ of degree $\le d$,
$R_1(x_1,x_2)$ is a polynomial in $x_1$ of degree $\le d$ whose coefficients
are measurable functions of $x_2$,
$| R_1(x)|\lesssim\lambda^\beta$ for every $x\in\R^2$,
all functions are real-valued,
and
\begin{equation}
\langle e^{i\phi_1},e^{i\Phi_1}\rangle
\gtrsim \lambda^{-\delta}.
\end{equation}
By the same argument used to reduce from general $f_j$ to $e^{i\phi_j}$ in \eqref{swallow},
\begin{equation}
A(\lambda) \lesssim \lambda^{C\delta}
\sup_{\Phi_1,\phi_2,\phi_3}
|I(\lambda P; e^{i\Phi_1},e^{i\phi_2},e^{i\phi_3})|
\end{equation}
where the supremum is taken over all $\Phi_1,\phi_2,\phi_3$ of the above form.
This argument can be repeated twice more to give
\begin{equation}
A(\lambda) \lesssim \lambda^{C\delta}
\sup_{\Phi_1,\Phi_2,\Phi_3}
|I(\lambda P; e^{i\Phi_1},e^{i\Phi_2},e^{i\Phi_3})|
\end{equation}
where each of the functions $\Phi_i$ shares the properties indicated above for $\Phi_1$.

Now
\begin{multline*}
I(\lambda P;e^{i\Phi_1},e^{i\Phi_2},e^{i\Phi_3})
=\iint e^{i\theta_{1,0}(x_2)}e^{i\theta_{2,0}(y_2)}e^{i\theta_{3,0}(x_2+y_2)}
\\
\cdot
\Big(\iint e^{i\lambda P(x,y)} e^{iQ_1(x)}e^{iQ_2(y)}e^{iQ_3(x+y)}
e^{i(R_1(x)+R_2(y)+R_3(x+y))}\eta dx_1dy_1\Big)dx_2dy_2.
\end{multline*}

Let $\widetilde{P}:=P+\lambda^{-1}{Q_1}+\lambda^{-1}{Q_2}+\lambda^{-1}{Q_3}$. Since $[P]=[\widetilde{P}]$, 
$\|P\|_{nd}=\| \widetilde {P}\|_{nd}$. We are left with:
$$I(\lambda P)= \iint e^{i\theta_{1,0}(x_2)}e^{i\theta_{2,0}(y_2)}e^{i\theta_{3,0}(x_2+y_2)}\Big(\iint e^{i\lambda \widetilde{P}(x,y)}e^{i(R_1(x)+R_2(y)+R_3(x+y))}\eta dx_1dy_1\Big)dx_2dy_2$$
where all functions in the exponents are real-valued, 
$\widetilde{P}$ is a polynomial of degree $\leq d$ such that $\| \widetilde {P}\|_{nd}=\|P\|_{nd}=1$, the $\theta_{j,0}$ and the $R_j$ are measurable functions, and the remainders $R_j(u_1,u_2)$ are polynomial functions 
of $u_1$ of degrees $\le d$, which satisfy $|R_j(u)|\lesssim \lambda^\beta$ for all $u\in\R^2$.

\section{The end of the proof}

Decompose
\begin{equation} \label{decomposeP}
\widetilde{P}=P_0+P^*
\qquad\text{where}\qquad
P_0(x_2,y_2)=\widetilde{P}(0,0,x_2,y_2) 
\end{equation}
and $P^*=\widetilde{P}-P_0$. 

Since
$$\psi_{00}(x_2,y_2)=\lambda P_0(x_2,y_2)+\theta_{1,0}(x_2)+\theta_{2,0}(y_2)+\theta_{3,0}(x_2+y_2),$$ 
we can write

\begin{equation}\label{expr}
I(\lambda P)= \iint e^{i\psi_{00}(x_2,y_2)}\Big(\iint e^{i\lambda P^*(x,y)}e^{i(R_1(x)+R_2(y)+R_3(x+y))}\eta dx_1dy_1\Big)dx_2dy_2.
\end{equation}

Our main assumption, namely that $P$ is nondegenerate with respect to the projections $\{\pi_j\}_{j=1}^3$, has not 
yet come into play. To apply it, we need a lemma:
\begin{lemma}
For any $d\in{\mathbb N}$ there exists $c>0$ with the following property.
Let 
$\tilde P:\R^4\rightarrow\R$ be any real-valued polynomial of degree $\leq d$. 
Decompose $\tilde P=P_0+P^*$ as in \eqref{decomposeP}.
Then
$$\Big\|\|P_{(x_2,y_2)}^*\|_{nc}\Big\|_{\P(d)}+\|P_0\|_{nd}\geq c\|\tilde P\|_{nd}.$$
\end{lemma}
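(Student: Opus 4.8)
### Proof proposal

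The plan is to establish this norm-comparison lemma purely by linear algebra, exploiting finite-dimensionality and homogeneity. First I would observe that the left-hand side, call it $N(\tilde P) := \big\|\|P^*_{(x_2,y_2)}\|_{nc}\big\|_{\P(d)} + \|P_0\|_{nd}$, is a nonnegative quantity which is homogeneous of degree one in the coefficients of $\tilde P$: both the map $\tilde P\mapsto P_0$ and the map $\tilde P\mapsto P^*$ are linear, the inner map $(x_2,y_2)\mapsto \|P^*_{(x_2,y_2)}\|_{nc}$ is a (pointwise, hence coefficient-wise) norm applied to a linear image of $\tilde P$, and $\|\cdot\|_{\P(d)}$ is itself a norm. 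So $N$ is a seminorm on the finite-dimensional vector space $\P(d)$ of polynomials of degree $\le d$ in $(x_1,y_1,x_2,y_2)$. Since $\|\cdot\|_{nd}$ is a seminorm on $\P(d)$ with kernel exactly $\P_{degen}$, by equivalence of all seminorms dominating a common quotient norm, it suffices to prove the qualitative statement: if $N(\tilde P)=0$ then $\tilde P\in\P_{degen}$, i.e.\ $\|\tilde P\|_{nd}=0$. The quantitative constant $c>0$ then follows automatically by a compactness argument on the unit sphere of $\P(d)/\P_{degen}$ (or directly from finite-dimensional norm equivalence), exactly as in the passages of \S4--5 where the authors invoke ``equivalence of all norms in any finite-dimensional vector space.''

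So the core is the qualitative claim. Suppose $N(\tilde P)=0$. Then $\|P_0\|_{nd}=0$, meaning the two-variable polynomial $P_0(x_2,y_2)$ is degenerate: $P_0 = a_1(x_2) + a_2(y_2) + a_3(x_2+y_2)$ for some one-variable polynomials $a_j$. Also $\|P^*_{(x_2,y_2)}\|_{nc}=0$ for every $(x_2,y_2)$, where $\|\cdot\|_{nc}$ is the norm on polynomials in $(x_1,y_1)$ modulo constants; thus for every fixed $(x_2,y_2)$, the polynomial $(x_1,y_1)\mapsto P^*(x_1,y_1,x_2,y_2)$ is constant in $(x_1,y_1)$. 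But $P^* = \tilde P - P_0$ vanishes at $(x_1,y_1)=(0,0)$ by construction, so that constant is $0$; hence $P^*\equiv 0$ and $\tilde P = P_0$, which is degenerate (as a polynomial in all four variables, since the one-variable decomposition of $P_0$ is a fortiori a decomposition of type $p_1\circ\pi_1 + p_2\circ\pi_2 + p_3\circ\pi_3$). Therefore $\|\tilde P\|_{nd}=0$, proving the kernel of $N$ is contained in $\P_{degen}$, which is all that is needed.

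I would then spell out the quantitative deduction: $N$ and $\|\cdot\|_{nd}$ both descend to seminorms on $\P(d)$ whose common kernel contains $\P_{degen}$; since we have shown $\ker N \subseteq \P_{degen} = \ker\|\cdot\|_{nd}$, the induced map $\bar N$ on the finite-dimensional quotient $\P(d)/\P_{degen}$ is a genuine norm (nonzero off the origin), and any two norms on a finite-dimensional space are comparable, so $\bar N \ge c\,\|\cdot\|_{nd}$ on the quotient for some $c=c(d)>0$; pulling back gives $N(\tilde P)\ge c\|\tilde P\|_{nd}$ for all $\tilde P\in\P(d)$. One small point to check carefully: that $(x_2,y_2)\mapsto\|P^*_{(x_2,y_2)}\|_{nc}$ is genuinely polynomial in $(x_2,y_2)$ when $\|\cdot\|_{nc}$ is the Hilbert-space (coefficient $\ell^2$) norm — it is the square root of a polynomial, but what we actually feed into $\|\cdot\|_{\P(d)}$ can be taken to be its square, or one simply notes the argument never used polynomiality, only that $N$ is a well-defined seminorm; I would phrase the statement so that $\big\|\|P^*_{(x_2,y_2)}\|_{nc}\big\|_{\P(d)}$ is interpreted as the $\ell^2$ norm of the coefficient array of $P^*$ itself (which is a norm on $P^*$), sidestepping the issue entirely. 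The main obstacle is essentially bookkeeping — making sure the normalization $P_0(x_2,y_2)=\tilde P(0,0,x_2,y_2)$ forces $P^*$ to vanish on the slice $x_1=y_1=0$ so that ``modulo constants'' in the $(x_1,y_1)$ variables genuinely kills $P^*$ — and there is no deep difficulty beyond that.
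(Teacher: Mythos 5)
Your proof is correct and follows essentially the same route as the paper's: observe that the left-hand side is (a homogeneous, subadditive) seminorm, reduce to the qualitative claim that its vanishing forces $P_0$ degenerate and $P^*\equiv 0$ (using that $P^*$ vanishes on the slice $x_1=y_1=0$), and conclude by finite-dimensional norm comparison. Your explicit handling of the square-root issue in interpreting $\big\|\|P^*_{(x_2,y_2)}\|_{nc}\big\|_{\P(d)}$ is a useful clarification that the paper leaves implicit.
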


The expression $\|P_0\|_{nd}$ in this lemma has two natural interpretations, but these define
the same quantity since
\begin{multline} \label{twopossible}
\min_{\textrm{deg} (p_i)\leq d}\|P_0(x_2,y_2)+p_1(x_1,x_2)+p_2(y_1,y_2)+p_3(x_1+y_1,x_2+y_2)\|_{\P(d)}\\
=\min_{\textrm{deg} (q_i)\leq d}\|P_0(x_2,y_2)+q_1(x_2)+q_2(y_2)+q_3(x_2+y_2)\|_{\P(d)}.
\end{multline}
The two inequalities implicit in this equality are obtained by setting $q_j(t)=p_j(0,t)$,
and by setting $p_j(t_1,t_2)=q_j(t_2)$, respectively.

\begin{proof}
The left-hand side defines a seminorm on the finite-dimensional vector space of
polynomials of degrees $\le d$ modulo degenerate polynomials,
so it suffices to show that if 
$\Big\|\|P_{(x_2,y_2)}^*\|_{nc}\Big\|_{\P(d)}$ vanishes then $P^*$ is degenerate,
and correspondingly for $P_0$.  For $P_0$ this is a tautology, in view of \eqref{twopossible}.
On the other hand,
$P^*(x,y)=\sum_{(j,k)\ne(0,0)} q_{j,k}(x_2,y_2)x_1^jy_1^k$
where $q_{j,k}$ are uniquely determined polynomials,
and
$\Big\|\|P_{(x_2,y_2)}^*\|_{nc}\Big\|_{\P(d)}=0$
if and only if $q_{j,k}\equiv 0$ for each $j,k$.
Thus $P^*\equiv 0$, so in particular, $P^*$ is degenerate.
\end{proof}

Therefore there exists a constant $c_d>0$ such that 
$\Big\|\|P_{(x_2,y_2)}^*\|_{nc}\Big\|_{\P(d)}\geq c_d$
or
$\|P_0\|_{nd}\geq c_d$.

\subsection{Case 1: $\Big\|\|P_{(x_2,y_2)}^*\|_{nc}\Big\|_{\P(d)}\geq c_d$.  }
We have that
\begin{multline*}
I(\lambda P)=\iint e^{i(\lambda P_0(x_2,y_2)+\theta_{1,0}(x_2)+\theta_{2,0}(y_2)+\theta_{3,0}(x_2+y_2))}
\\
\cdot\Big(\iint e^{i\lambda P^*(x,y)}e^{i(R_1(x)+R_2(y)+R_3(x+y))}\eta dx_1dy_1\Big)dx_2dy_2,
\end{multline*}
where by Theorem \ref{sp}, the absolute value of the inner integral is 
$$\lesssim (\lambda \|P^*_{(x_2,y_2)}+\lambda^{-1}R_{1,(x_2)}+\lambda^{-1}R_{2,(y_2)}+\lambda^{-1}R_{3,(x_2+y_2)}\|_{nc})^{-1/d}.$$
Since $|R_j|\le\lambda^\beta$ and $\beta<1$,
we have that, if $\lambda$ is large enough, then
$$\|P^*_{(x_2,y_2)}+\lambda^{-1}R_{1,(x_2)}+\lambda^{-1}R_{2,(y_2)}+\lambda^{-1}R_{3,(x_2+y_2)}\|_{nc}\gtrsim \|P^*_{(x_2,y_2)}\|_{nc}$$
for every $(x_2,y_2)\in \R^2$. We conclude that the absolute value of the inner integral is 
$$\lesssim (\lambda\|P^*_{(x_2,y_2)}\|_{nc})^{-1/d}.$$
If $(x_2,y_2)\in \R^2$ is such that 
$$\|P^*_{(x_2,y_2)}\|_{nc}\gtrsim\lambda^{-1+\tau}\textrm{ for some }\tau>0,$$ 
we get the desired decay. Otherwise, observe that (i) implies a sublevel set estimate of the form
$$|\{(x_2,y_2)\in\R^2:\|P^*_{(x_2,y_2)}\|_{nc}<\lambda^{-1+\tau}\}|\lesssim (\lambda^{-1+\tau})^{\delta}.$$ 
Therefore the contribution of the set of such points $(x_2,y_2)$ to the integral is small, and this concludes the analysis of Case 1.

\subsection{ Case 2: $\|P_0\|_{nd}\geq c_d$.} \label{case2}
By Fubini,
\begin{multline*}
I(\lambda P)=\iint \Big(\iint e^{i\lambda [P_0(x_2,y_2)+P^*(x,y)]} \\
\cdot e^{i(\theta_{1,0}+R_{1,(x_1)})(x_2)}e^{i(\theta_{2,0}+R_{2,(y_1)})(y_2)}e^{i(\theta_{3,0}+R_{3,(x_1+y_1)})(x_2+y_2)}\eta dx_2dy_2\Big)dx_1dy_1.
\end{multline*}
By Theorem \ref{l2}, the absolute value of the inner integral in the last expression is $\lesssim (1+\lambda^2\|P_0+P_{(x_1,y_1)}^*\|^2_{nd})^{-\rho}$, for some $\rho=\rho(d)>0$. It follows that 
$$|I(\lambda P)|\lesssim \iint (1+ \lambda^2\|P_0+P_{(x_1,y_1)}^*\|^2_{nd})^{-\rho}dx_1dy_1.$$
To handle this integral, note that $P^*(0,0,x_2,y_2)=0$ and hypothesis (ii) together imply that
$$|\{(x_1,y_1)\in\R^2: \|P_0+P^*_{(x_1,y_1)}\|^2_{nd}<\epsilon\}|\lesssim \epsilon^{\delta}$$ because $(x_1,y_1)\mapsto \|P_0+P_{(x_1,y_1)}^*\|_{nd}^2$ is a polynomial of degree $\leq 2d$.
An argument entirely analogous to the one used to prove Lemma~\ref{first} concludes the analysis.

\section{Higher Dimensions and Generalization} \label{section:higher}




So far, we have only discussed the case where the domain of $P$ is $\R^4$, but
$\R^{2\kappa}$ for $\kappa>2$ is treated in essentially the same way. Now write
$x=(x',x_\kappa)$, $y=(y',y_\kappa)$
where $x',y'\in\R^{\kappa-1}$.
The proof proceeds by induction on $\kappa$.
The only significant change in the proof is that
in Case 2 of the final step of the proof, since $\R^{\kappa-1}$ is no longer $\R^1$,
Theorem~\ref{l2} does not apply; instead, one simply invokes the induction hypothesis.

Our result may be generalized to include arbitrary smooth phases and not just polynomial ones. The details are a straightforward modification of those in \cite{G} and will therefore not be included.

\section{Proof of Lemma~\ref{cousin}}

It remains to prove Lemma~\ref{cousin}. The proof  will rely on the following related, but simpler, result. 
Let $X$ be a normed linear space.
We write $|x|$ to denote the norm of a vector in $X$.
\begin{lemma}\label{frust}
Let $\Omega,\Omega'$ be probability spaces with measures $\mu,\mu'$, and let $f,f'$ be 
$X$-valued functions defined on these spaces. Let $0<r<1$ and $R\in (0,\infty)$. Let $E\subset\Omega\times\Omega'$ satisfy $(\mu\times\mu')(E)\geq r$. Suppose that 
$$|f(x)-f'(x')|\leq R\textrm{ for all }(x,x')\in E.$$
Then there exist $a\in X$ and $G\subset\Omega$,$G'\subset\Omega'$ such that
$$\mu(G)\geq cr$$
$$\mu'(G')\geq cr$$
$$|f(x)-a|\leq CR\textrm{ for all }x\in G$$
$$|f'(x')-a|\leq CR\textrm{ for all }x'\in G'.$$
Here $c,C$ are certain absolute constants. 
\end{lemma}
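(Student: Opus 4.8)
The plan is to exploit the product structure of $E$ via a pigeonhole/averaging argument on the fibers. For each $x\in\Omega$, let $E_x=\{x'\in\Omega':(x,x')\in E\}$ be the corresponding vertical fiber. By Fubini, $\int_\Omega \mu'(E_x)\,d\mu(x)=(\mu\times\mu')(E)\ge r$, so the set $G_0=\{x\in\Omega:\mu'(E_x)\ge r/2\}$ has measure $\mu(G_0)\ge r/2$; otherwise the integral would be at most $\mu(G_0)\cdot 1+(1-\mu(G_0))\cdot(r/2)<r$. The first key step is therefore: on a set of $x$'s of measure $\gtrsim r$, the fiber $E_x$ is not too small.

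The second step is to fix one good base point. Choose any $x_0\in G_0$, and set $G'=E_{x_0}$, so $\mu'(G')\ge r/2$, and set $a=f(x_0)\in X$. By hypothesis, $|f(x_0)-f'(x')|\le R$ for all $x'\in E_{x_0}=G'$, which already gives $|f'(x')-a|\le R$ for all $x'\in G'$, as desired (with constant $C\ge 1$). The third step recovers the bound on $f$: take $G=\{x\in G_0: \mu'(E_x\cap E_{x_0})>0\}$. For $x\in G$, pick $x'\in E_x\cap E_{x_0}$; then $|f(x)-f'(x')|\le R$ and $|f(x_0)-f'(x')|\le R$, so by the triangle inequality $|f(x)-a|=|f(x)-f(x_0)|\le 2R$. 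It remains to check $\mu(G)\ge cr$: for $x\in G_0$, both $\mu'(E_x)\ge r/2$ and $\mu'(E_{x_0})\ge r/2$, so if additionally $r/2+r/2>1$ the fibers must overlap; in general this requires a further pigeonhole. To handle all $r\in(0,1)$, instead of a single $x_0$ one partitions (or rather, one observes that) the fibers $E_x$ for $x\in G_0$ cannot all be nearly disjoint: $\int_{G_0}\mu'(E_x)\,d\mu(x)\ge (r/2)\mu(G_0)$, and comparing with the ``multiplicity'' of the family one finds a point $x'_*\in\Omega'$ lying in $E_x$ for a set of $x\in G_0$ of measure $\gtrsim r$ (a second Fubini, now on the horizontal fibers of $E\cap(G_0\times\Omega')$). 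Then set $a=f'(x'_*)$, let $G=\{x\in G_0: x'_*\in E_x\}$ and $G'=\{x'\in\Omega': (x_0,x')\in E\}$ for a fixed $x_0\in G$; the triangle inequality as above gives $|f(x)-a|\le R$ on $G$ and $|f'(x')-a|\le 2R$ on $G'$.

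The main obstacle is purely the bookkeeping of the two pigeonhole steps so that the resulting constants $c,C$ are genuinely absolute and independent of $r$; the geometric content — ``a product-like set has a common point in many fibers in both directions'' — is elementary. Concretely, one Fubini identifies a large set $G_0$ of good columns, a second Fubini applied to $E\cap(G_0\times\Omega')$ identifies a single row-point $x'_*$ meeting $\gtrsim r$ of those columns, and then the triangle inequality through $f'(x'_*)$ closes the argument with $c$ an absolute fraction (one gets $c=\tfrac14$ suffices) and $C=2$.
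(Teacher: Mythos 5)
Your overall strategy is the same as the paper's: use Fubini/pigeonhole to produce a point $(x_0,x'_*)\in E$ whose vertical slice $E_{x_0}$ and horizontal slice $\{x:(x,x'_*)\in E\}$ both have measure $\gtrsim r$, and then run the triangle inequality through that corner point to get a common center $a$ with $C\le 2$. The only structural difference is how the corner point is found: the paper decomposes $E$ into the ``good'' set where both slices through the point have measure $\ge r/4$ and the ``bad'' complement, and shows by contradiction that the good set has positive measure; you locate such a point constructively by two successive Fubini steps. Either route is legitimate.

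There is, however, a genuine gap in the quantitative part of your second Fubini step, and it is exactly the bookkeeping you yourself flag as the ``main obstacle.'' After producing $G_0=\{x:\mu'(E_x)\ge r/2\}$ with $\mu(G_0)\ge r/2$, you consider $\widetilde{E}:=E\cap(G_0\times\Omega')$ and quote the lower bound
$\int_{G_0}\mu'(E_x)\,d\mu(x)\ge (r/2)\,\mu(G_0)\ge r^2/4.$
Since by Fubini the $\mu'$-average of $\mu(\widetilde{E}^{\,x'})$ over $\Omega'$ (a probability space) equals $(\mu\times\mu')(\widetilde{E})$, this bound only yields a point $x'_*$ with $\mu(\widetilde{E}^{\,x'_*})\gtrsim r^2$, not $\gtrsim r$. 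That would make your final constant $c$ depend on $r$, which defeats the statement. The correct estimate is obtained by subtracting the mass carried by the thin columns you discarded: for $x\notin G_0$ one has $\mu'(E_x)<r/2$, so
$(\mu\times\mu')(\widetilde{E}) = (\mu\times\mu')(E) - \int_{\Omega\setminus G_0}\mu'(E_x)\,d\mu(x) \ge r - \tfrac{r}{2} = \tfrac{r}{2}.$
With this, Fubini gives an $x'_*$ with $\mu(\widetilde{E}^{\,x'_*})\ge r/2$, and the remainder of your argument (taking $x_0\in\widetilde{E}^{\,x'_*}$, $a=f'(x'_*)$, $G=\widetilde{E}^{\,x'_*}$, $G'=E_{x_0}$) closes with $c=1/2$ and $C=2$. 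So the plan is right and essentially the paper's, but the bound you actually wrote down does not justify the ``$\gtrsim r$'' you assert; it needs to be replaced by the subtraction argument above.
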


\begin{proof}
$|E|$ will denote $(\mu\times\mu')(E)$.
Let $\pi_1:\Omega\times\Omega'\rightarrow\Omega$ and $\pi_2:\Omega\times\Omega'\rightarrow\Omega'$ denote the canonical projections. For $x\in\pi_1(E)$ and $x'\in\pi_2(E)$, consider the slices
\begin{displaymath}
\left\{ \begin{array}{ll}
E^x:=\{x'\in\Omega': (x,x')\in E\}\\
E^{x'}:=\{x\in\Omega: (x,x')\in E\}.\\
\end{array} \right.
\end{displaymath}
By Fubini, $E^x$ is $\mu'$-measurable for $\mu$-a.e. $x$ and $E^{x'}$ is $\mu$-measurable for $\mu'$-a.e. $x'$.

{\em Claim.} There exists $(x_0,x_0')\in E$ such that
\begin{displaymath}
\left\{ \begin{array}{ll}
G:=E^{x_0'}\subset\Omega\textrm{ is }\mu\textrm{-measurable and }\mu(G)\geq cr\\
G':=E^{x_0}\subset\Omega'\textrm{ is }\mu'\textrm{-measurable and }\mu(G')\geq cr.\\
\end{array} \right.
\end{displaymath}
Assuming the claim, we have that:
\begin{displaymath}
\left\{ \begin{array}{ll}
|f(x)-f'(x_0')|\leq R & \textrm{for every $x\in G$,}\\
|f(x_0)-f'(x_0')|\leq R\\
|f(x_0)-f'(x')|\leq R & \textrm{for every $x'\in G'$.}\\
\end{array} \right.
\end{displaymath}
Let  $a:=\frac{f(x_0)+f'(x_0')}{2}$. Then, for any $x\in G$,
$$|f(x)-a|\leq |f(x)-f'(x_0')|+|f'(x_0')-a|\leq \frac{3}{2}R,$$ and similarly for $x'\in G'$.

To prove the claim, start by assuming that $E^x$ and $E^{x'}$ are measurable for every $(x,x')\in E$.
Express $E$ as a disjoint union $E=\G\cup\B$, where
\begin{displaymath}
\left\{ \begin{array}{ll}
\G=\{(x,x')\in E: \mu'(E^x)\geq \frac{r}{4}\textrm{ and } \mu(E^{x'})\geq \frac{r}{4}\}\\
\B=\B_1\cup\B_2:=\{(x,x')\in E: \mu'(E^x)<\frac{r}{4}\}\cup\{(x,x')\in E: \mu(E^{x'})<\frac{r}{4}\}.\\
\end{array} \right.
\end{displaymath}

\noindent We prove the stronger statement $|\G|>0$. Suppose on the contrary that $|E|=|\B|$. Then 
$$r\leq |E|=|\B|=|\B_1\cup\B_2|\leq|\B_1|+|\B_2|, $$
whence $|\B_1|\geq\frac{r}{2}$ or $|\B_2|\geq\frac{r}{2}$. Without loss of generality assume that the former holds. Then 
$$\Big|\Big\{(x,x')\in E: \mu'(E^x)\geq \frac{r}{4}\Big\}\Big|\leq |E|-\frac{r}{2}.$$
But then, defining $S_1:=\{x\in\pi_1(E): \mu'(E)\geq\frac{r}{4}\}$ and $S_2:=\pi_1(E)\setminus S_1$, we have that
\begin{align*}
|E|=\int_{\pi_1(E)}\mu'(E^x)d\mu(x)=&\int_{S_1}\mu'(E^x)d\mu(x)+\int_{S_2}\mu'(E^x)d\mu(x)\\
\leq&\Big(|E|-\frac{r}{2}\Big)+\frac{r}{4},
\end{align*}
a contradiction.
\end{proof}

\begin{proof} [Proof of Lemma \ref{cousin}]
Let $A$ be the norm of $P$ in the quotient space of $\R^D$--valued 
polynomials of degree $\leq d$ modulo degenerate polynomials with respect to the pair of 
projections $(x,y)\mapsto x$ and $(x,y)\mapsto y$. 
It is well known \cite{S} that a decay bound of the form \eqref{decay}
holds for these projections, that is,
for any $d$ there exists $\rho>0$ such that
for any compact sets $K,K'\subset\R$
there exists $C<\infty$ such that
$|\int_{\R^2}e^{iQ(x,y)}f(x)g(y)\,dx,dy|\le C(1+\|Q\|_{\text{nd}})^{-\rho}
\|f\|_2\|g\|_2$ 
for all functions $f,g$ supported on $K,K'$ respectively, 
for all real-valued polynomials $Q$ of degree $\le d$.
This in turn, applied to the individual components of $P$, implies a sublevel set inequality
of the form
$$|E|\leq C_\gamma A^{-\gamma},$$
where $C_\gamma=\frac{C}{1-\gamma}$, $C$ is an absolute constant, and $\gamma$ depends only on $d$; 
see for instance the discussion in \cite{C} for the simple derivation.

So 
$A\le
C_\gamma\epsilon^{-1/\gamma}$, that is, 
$$\inf_{\textrm{deg }p,q\leq d}\sup_{(x,y)\in [0,1]^2} |P(x,y)-p(x)-q(y)|\leq
C_\gamma \epsilon^{-1/\gamma}.
$$

\noindent The inf is actually a minimum, so there exist polynomials $p$ and $q$ of degree $\leq d$ such that 
$$\sup_{(x,y)\in [0,1]^2} |P(x,y)-p(x)-q(y)|\leq
C_\gamma \epsilon^{-1/\gamma}.$$
In particular, for $(x,y)\in E$, we have that
\begin{align*}
|(f(x)+p(x))+(g(y)+q(y))|\leq& |f(x)+g(y)+P(x,y)|+|p(x)+q(y)-P(x,y)|\\
\leq& 1+
C_\gamma \epsilon^{-1/\gamma}
=
C'_\gamma\epsilon^{-1/\gamma}.
\end{align*}
Apply lemma \ref{frust} to conclude the existence of $a\in\C$ and $E_1\subset [0,1]$ such that $|E_1|\geq \frac{\epsilon}{4}$ and 
$$|f(x)+p(x)-a|\leq C_\gamma\epsilon^{-1/\gamma}=C_\gamma |E|^{-1/\gamma}\;\;\;\;\;\textrm{for every }x\in E_1.$$
Proceeding similarly for $g+q$ completes the proof.
\end{proof}

\end{document}